\newcommand{\eps}{\varepsilon}
\newcommand{\K}{{\mathcal K}}
\newcommand{\N}{{\mathbb N}}
\newcommand{\C}{{\mathbb C}}
\newcommand{\D}{{\mathbb D}}
\newcommand{\Ha}{{\mathbb H}}
\newcommand{\Z}{{\mathbb Z}}
\newcommand{\R}{{\mathbb R}}
\newcommand{\tef}{transcendental entire function}
\newcommand\qfor{\;\;\text{for }}
\theoremstyle{plain}
\newtheorem{theorem}{Theorem}[section]
\newtheorem{corollary}[theorem]{Corollary}
\newtheorem*{theorem*}{Theorem}
\newtheorem*{proposition*}{Proposition}
\newtheorem{lemma}[theorem]{Lemma}
\theoremstyle{definition}
\newtheorem{definition}[theorem]{Definition}
\theoremstyle{remark}
\newtheorem*{remark*}{Remark}
\newtheorem*{remarks*}{Remarks}
\theoremstyle{problem}
\theoremstyle{example}
\newtheorem{example}[theorem]{Example}
\newtheorem*{example*}{Example}
\theoremstyle{question}
\theoremstyle{questions}
\newtheorem*{questions*}{Questions}
\begin{document}


\title[On subharmonic and entire functions]{On subharmonic and entire functions of small order: after Kjellberg}

\author{P. J. Rippon}
\address{School of Mathematics and Statistics \\
The Open University \\
   Walton Hall\\
   Milton Keynes MK7 6AA\\
   UK}
\email{phil.rippon@open.ac.uk}

\author{G. M. Stallard}
\address{School of Mathematics and Statistics \\
The Open University \\
   Walton Hall\\
   Milton Keynes MK7 6AA\\
   UK}
\email{gwyneth.stallard@open.ac.uk}

\thanks{2010 {\it Mathematics Subject Classification.}\; Primary 30D15, secondary 30F45, 31A05.\\Both authors were supported by the EPSRC grant EP/R010560/1.}



\begin{abstract}
We give a general method for constructing examples of transcendental entire functions of given small order, which allows precise control over the size and shape of the set where the minimum  modulus of the function is relatively large. Our method involves developing a new technique to give an upper bound for the growth of a positive harmonic function defined in a certain type of multiply connected domain, giving a sharp estimate for the growth in many cases.
\end{abstract}
\maketitle

\section{Introduction}\label{intro}
\setcounter{equation}{0}

This paper concerns transcendental entire functions of small order. Such functions have been studied extensively in classical complex analysis, ever since Wiman \cite{aW05} observed that such functions have properties which, in some ways, resemble those of polynomials. Subsequently, powerful results such as the version of the cos\,$\pi \rho$ theorem due to Barry \cite{Ba63} showed that, for such functions, the minimum modulus of the function on many circles centred at the origin is comparable in size to the maximum modulus of the function.

More recently, these properties have led to functions of small order playing a key role in two major conjectures in complex dynamics: Baker's conjecture explicitly concerns such functions (see \cite{NRS18} for recent progress on this conjecture) and they were also shown (in \cite{RS09a} and \cite{RS12}) to have an unexpected link with Eremenko's conjecture \cite{E}, one of the main drivers of research in transcendental dynamics, arising from the fact that for functions of small order the escaping set of the function is often connected; see \cite{NRS19} and \cite{RS12}.

In this paper we give a very general method for constructing examples of functions of given small order, including order~0, which allows precise control over the size and shape of the set where the minimum  modulus of the function is relatively large. The original motivation for this work was related to further progress on Eremenko's conjecture, which we report on in forthcoming work \cite{NRS20}, but our new results here have wider applications.

For a transcendental entire function~$f$ the {\it maximum modulus} and {\it minimum modulus} of $f$ are denoted by
\[
M(r) = M(r,f)=\max_{|z|=r} |f(z)| \quad\text{and}\quad m(r) = m(r,f)=\min_{|z|=r} |f(z)|,
\]
respectively. The {\it order} of~$f$ is
\[
\limsup_{r \to \infty} \frac{\log \log M(r)}{\log r}
\]
and we say that a function has {\it small order} if it has order less than~1/2. Functions of order 1/2, {\it minimal type}, that is, functions of order 1/2 with $\log M(r) = o(r^{1/2})$ as $r \to \infty$, are sometimes included in this class, since they also have many circles on which the minimum modulus is relatively large (though far fewer such circles than for functions of order less than~$1/2$).

Here we give a considerable generalisation of Kjellberg's construction \cite{bK48} of {\tef}s with orders in the range $(0,1/2)$, which he used to show that various theorems about the minimum modulus of {\tef}s of order less that 1/2 are in a certain sense best possible.

Kjellberg's construction approximates continuous subharmonic functions with specified properties by functions of the form $\log |f|$, where~$f$ is a {\tef}, with similar properties. His construction is in two stages:
\begin{itemize}
\item[1.]
a continuous subharmonic function~$u$ with the required properties is specified by using a positive harmonic function defined on a domain whose complement is a union of radial slits, on which~$u$ vanishes;
\item[2.]
the Riesz measure of~$u$ is discretised to produce an entire function~$f$ such that $\log |f|$ is close to~$u$ away from the zeros of~$f$.
\end{itemize}
In Kjellberg's original construction, the radial slits were chosen to lie on a ray from the origin in such a way that the union of the slits is invariant under a scaling of the plane. This invariance was a key property to allow various parts of his reasoning to succeed.

Here we give a method which allows the slits to be chosen much more flexibly. This enables us to construct examples of entire functions with prescribed order~$\rho$, for each $\rho$ in the {\it closed} interval $[0,1/2]$, which also have bounded minimum modulus on as large a set as possible given the order. To achieve the necessary control, we introduce a new technique for estimating the growth of certain positive harmonic functions from above; this technique may well have applications beyond our current purpose.

We introduce a class of subharmonic functions named $\K$, after Kjellberg.

\begin{definition}
A subharmonic function~$u$ is in the class ${\mathcal K}$ if $u$ is continuous in $\C$ and positive harmonic in $D=\C\setminus E$, where $E\subset (-\infty,0]$ is a closed set on which $u$ vanishes. We assume that each point of~$E$ is regular for the Dirichlet problem in~$D$.

We denote by $u_{\alpha,\beta}$ the function in $\K$ corresponding to the set
\[
E=\{0\}\cup\bigcup_{n\in \Z}[-\alpha\beta^n,-\beta^n],
\]
where $1<\alpha<\beta$, and $u_{\alpha,\beta}(1)=1$.
\end{definition}

{\it Remarks}\; 1. For each closed subset~$E$ of the negative real axis, there is exactly one corresponding function $u\in \K$, up to positive scalar multiples, by a result of Benedicks \cite[Theorem~4]{Ben}, and such a~$u$ is unbounded and symmetric with respect to the real axis.

2. The functions $u_{\alpha,\beta}$ correspond to the functions considered by Kjellberg in his original construction \cite[Chapter 3]{bK48}. His set~$E$ was of the form $E=\{0\}\cup\bigcup_{n\in \Z}[\beta^n,\alpha\beta^n]$, where $1<\alpha<\beta$, whereas for us it is on balance more convenient to have the set~$E$ contained in the negative real axis.

To state our results, we recall that for a set $S\subset \R^+$ and $r>1$, the {\it upper logarithmic density} of~$S$ is
\[
\overline{\Lambda}(S) = \limsup_{r\to \infty}\frac{1}{\log r}\int_{S\cap(1,r)}\frac{dt}{t},
\]
and the {\it lower logarithmic density} of~$S$ is
\[
\underline{\Lambda}(S) = \liminf_{r\to \infty}\frac{1}{\log r}\int_{S\cap(1,r)}\frac{dt}{t}.
\]
When $\overline{\Lambda}(S)=\underline{\Lambda}(S)$ we speak of the {\it logarithmic density} of $S$, denoted by $\Lambda(S)$.

We also define, for a continuous subharmonic function~$u$ in $\C$,
\[
A(r)=A(r,u)=\min_{|z|=r} u(z)\quad\text{and}\quad B(r)=B(r,u)=\max_{|z|=r} u(z),
\]
and the {\it order} and {\it lower order} of $u$,
\[
\rho(u)=\limsup_{r\to\infty}\frac{\log B(r)}{\log r}\quad \text{and}\quad \lambda(u)=\liminf_{r\to\infty}\frac{\log B(r)}{\log r},
\]
respectively.

First, we give some basic properties of {\it all} functions in the class $\K$.
\begin{theorem}\label{basic-props}
Let $u\in \K$, with $E$ the corresponding closed subset of the negative real axis. Then $u$ has the following properties.
\begin{itemize}
\item[(a)]
Monotonicity properties:
for all $r>0$,
\[
u(re^{i\theta})\;\;\text{is decreasing as a function of } \theta, \text{ for } 0\le \theta \le \pi,
\]
so, in particular, $B(r,u)=u(r)$ and $A(r,u)=u(-r)$ for all $r>0$. Also,
\[
\frac{u(r)}{r^{1/2}}=\frac{B(r)}{r^{1/2}}\;\;\text{is decreasing for } r>0,
\]
so, in particular, $\rho(u)\le 1/2$.
\item[(b)] Bounds for order and lower order:
\[
\rho(u)\ge \tfrac12 \overline{\Lambda}(E^*)\quad {\text and} \quad \lambda(u)\ge \tfrac12 \underline{\Lambda}(E^*),
\]
where $E^*=\{|x|:x\in E\}$.
\end{itemize}
\end{theorem}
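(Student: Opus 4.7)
My plan is to handle parts (a) and (b) in turn, with the central tool being the conformal map $w\mapsto w^2$ that takes the right half-plane $H=\{\Re w>0\}$ onto $\C\setminus(-\infty,0]$. Setting $U(w)=u(w^2)$, this $U$ is positive harmonic on $H$, continuous up to $\partial H$, with boundary values $U(it)=u(-t^2)\ge 0$ that vanish precisely on $\pm i\sqrt{E^*}$.

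For part (a), monotonicity in $\theta$ together with the identifications $B(r)=u(r)$ and $A(r)=u(-r)$: the symmetry $u(\bar z)=u(z)$ is Remark~1. To prove $u(re^{i\theta})$ is decreasing in $\theta$ on $[0,\pi]$ I would use circular symmetrization with respect to $\R^+$. The symmetrization $u^*$ is subharmonic (Baernstein), symmetric about $\R$, has $u^*(re^{i\theta})$ decreasing in $|\theta|$ on $[0,\pi]$, and coincides with $u$ in distribution on each circle; since $\{u=0\}=E\subset\R^-$ meets each circle $|z|=r$ in at most one point, the same is true of $\{u^*=0\}$, and indeed $\{u^*=0\}=E$. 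A comparison argument using the uniqueness in Remark~1 of the positive harmonic function on $D$ vanishing on $E$, together with identification of circular $L^1$-means, then forces $u^*=u$, yielding the monotonicity.

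For part (a), monotonicity of $B(r)/r^{1/2}$: the Herglotz representation for positive harmonic functions on $H$ gives
\[
U(w)=c\,\Re w+\frac{1}{\pi}\int_{-\infty}^{\infty}\frac{(\Re w)\,U(it)}{(\Re w)^2+(\Im w-t)^2}\,dt,\qquad c\ge0,
\]
where the integrability of $U(it)/(1+t^2)$ is obtained a priori from a Phragm\'en--Lindel\"of bound $U(w)=O(\Re w)$ on $H$. Setting $w=R>0$,
\[
\frac{U(R)}{R}=c+\frac{1}{\pi}\int_{-\infty}^{\infty}\frac{U(it)}{R^2+t^2}\,dt,
\]
which is manifestly a decreasing function of $R$. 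Since $u(r)=U(\sqrt r)$ and $B(r)=u(r)$ by the previous step, $B(r)/r^{1/2}=U(\sqrt r)/\sqrt r$ is decreasing in $r$, and $\rho(u)\le 1/2$ follows at once.

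For part (b) both bounds follow from the Carleman--Beurling-type estimate
\[
\log u(r)\ge \tfrac{1}{2}\int_{E^*\cap[1,r]}\frac{dt}{t}+O(1),\qquad r\to\infty,
\]
by dividing by $\log r$ and taking, respectively, $\limsup$ and $\liminf$. To prove this I would again work with $U$ on $H$ and use that the vanishing of $U$ at $\pm i\sqrt{t}$ for each $t\in E^*$ forces, via a harmonic-measure argument (Beurling's projection theorem being the natural vehicle), a corresponding lower contribution to $\log U(R)$ at $R=\sqrt r$; the geometric factor $\tfrac12$ arises from the squaring in $w\mapsto w^2$. The main obstacle I anticipate is extracting the sharp constant $\tfrac12$ and combining the contributions of the infinitely many disjoint zero-intervals of $E^*$ without cumulative loss --- this is the crux of Kjellberg's classical argument.
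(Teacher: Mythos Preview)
For the monotonicity of $u(re^{i\theta})$ in $\theta$, your symmetrization route is considerably heavier than what the paper does, and it has a gap where you invoke uniqueness. Benedicks' result (Remark~1) gives uniqueness among functions in $\K$, i.e.\ among positive \emph{harmonic} functions on $D=\C\setminus E$ vanishing on $E$; but circular symmetrization of a harmonic function is in general only subharmonic, so you do not know $u^*\in\K$. Equality of circular $L^1$-means alone does not force $u^*=u$ without first establishing that $u^*$ is harmonic on $D$ (equivalently, that the Riesz measure of $u^*$ sits on $E$), and you have not indicated how to do this. The paper sidesteps all of this with the Riesz potential representation
\[
u(z)=u(0)+\int_0^\infty \log\bigl|1+z/t\bigr|\,d\mu(t),
\]
where $\mu$ is the Riesz measure of $u$, supported on $E\subset(-\infty,0]$. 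For each fixed $t>0$ the map $\theta\mapsto\log|1+re^{i\theta}/t|$ is decreasing on $[0,\pi]$ by elementary geometry, and the integral inherits the monotonicity.

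Your argument for the decrease of $B(r)/r^{1/2}$ is exactly the paper's: set $U(w)=u(w^2)$ on the right half-plane and read off from the Poisson representation that $U(R)/R$ is decreasing.

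For part~(b) you correctly anticipate a Beurling-type growth estimate, but your plan to go through the half-plane and a harmonic-measure/projection argument is vague precisely at the point you flag as the obstacle. The paper avoids this entirely by applying Beurling's lemma (stated as Lemma~2.1, from Beurling's thesis) directly to the subharmonic function $u$ in $\C$:
\[
B(r_2,u)>\tfrac12\exp\Bigl(\tfrac12\int_{E(r_1,r_2)}\frac{dt}{t}\Bigr)B(r_1,u),\qquad E(r_1,r_2)=\{r\in[r_1,r_2]:A(r,u)\le 0\}.
\]
Since $A(r,u)=u(-r)=0$ for every $r\in E^*$, this gives at once
\[
\log u(r)\ge \tfrac12\int_{E^*\cap(1,r)}\frac{dt}{t}+\log u(1)-\log 2,
\]
and dividing by $\log r$ yields both inequalities in (b). No passage to the half-plane is needed, and the sharp constant $\tfrac12$ comes for free from the lemma.
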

It is natural to ask whether equality holds for the lower bounds in part~(b). For the functions $u_{\alpha,\beta}$, $1<\alpha<\beta$, Kjellberg proved that
\[
\frac{\log \alpha}{2\log \beta}\le \rho(u_{\alpha,\beta})=\lambda(u_{\alpha,\beta})<\frac12.
\]
Here the lower bound is the one given by Theorem~\ref{basic-props}, part~(b) and Kjellberg showed that equality does not hold in general; see the discussion below, after Theorem~\ref{exactorder}. He obtained the strict upper bound of~$1/2$ by using the invariant nature of his set~$E$; see Section~\ref{min-type-result}, Remark~2 for a proof of this upper bound for the functions $u_{\alpha,\beta}$.

In our main result, which follows, we show that for many functions $u\in\K$, corresponding to sets~$E$ that are unions of closed intervals, the order and lower order of~$u$ can be expressed explicitly in terms of the geometric properties of the set~$E$. We do this by developing a new technique to give upper bounds for the growth of positive harmonic functions defined in multiply connected domains, using a result about the relationship between the Harnack metric and the hyperbolic metric in such a domain \cite{dH87,hBwS}, together with the Beardon--Pommerenke estimate for the density of the hyperbolic metric \cite{aBcP}.
\begin{theorem}\label{exactorder}
Suppose that $u\in\K$ and
\[
E=\bigcup_{n\ge 0}[-b_n,-a_n],
\]
where $0\le a_0<b_0<a_1<b_1< \cdots,$ and $a_n \to \infty$ as $n\to\infty$. If
\begin{equation}\label{an-cond}
a_n^{1/n} \to \infty\;\text{ as }n\to \infty,
\end{equation}
then
\begin{equation}\label{order-exact}
\rho(u)=\tfrac12\overline{\Lambda}(E^*)\quad {\text and} \quad \lambda(u)= \tfrac12 \underline{\Lambda}(E^*).
\end{equation}
\end{theorem}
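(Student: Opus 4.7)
By Theorem~\ref{basic-props}(b) the lower bounds $\rho(u) \ge \tfrac12\overline\Lambda(E^*)$ and $\lambda(u) \ge \tfrac12\underline\Lambda(E^*)$ are already available, so I would focus on proving the matching upper bounds. Normalising so that $u(1) = 1$ and using Theorem~\ref{basic-props}(a) to identify $B(r)$ with $u(r)$, the goal reduces to showing that for every $\varepsilon > 0$,
\[
\log u(r) \le \bigl(\tfrac12\overline\Lambda(E^*) + \varepsilon\bigr)\log r
\]
for all sufficiently large $r$, together with the analogous sequential statement for $\lambda(u)$.

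The first step is to interpret this as a distance bound on the multiply connected domain $D = \C \setminus E$. Harnack's inequality for the positive harmonic function $u$ on $D$ gives $\log u(r) \le h_D(1, r)$, where $h_D$ is the Harnack metric, and the comparison results of \cite{dH87, hBwS} then dominate $h_D$ by a universal constant multiple of the hyperbolic metric $d_D$. Recovering the precise factor $\tfrac12$ of the theorem will require that this comparison constant be effectively $1$, or that its slack be absorbed into $\varepsilon$---a point I discuss again below. In any case, the problem reduces to producing a sufficiently sharp upper bound for $d_D(1, r)$.

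To estimate $d_D(1, r)$ I would integrate the hyperbolic density $\rho_D$ along the segment $[1, r]$ of the positive real axis. The crude bound $\rho_D(x) \le \tfrac{1}{2x}$---inherited from the inclusion $D \supset \C\setminus(-\infty, 0]$ together with the explicit hyperbolic density in the slit plane---already gives $d_D(1, r) \le \tfrac12 \log r$, which only recovers the universal bound $\rho(u) \le 1/2$ from Theorem~\ref{basic-props}(a). The improvement involving $\overline\Lambda(E^*)$ must come from the Beardon--Pommerenke estimate \cite{aBcP} for $\rho_D$, which includes a logarithmic correction reflecting the local concentration of $\partial D$. On the portions of $[1, r]$ with $-x$ in a large gap $(-a_n, -b_{n-1})$ of $E$, this logarithmic factor is large and makes $\rho_D(x)$ much smaller than $\tfrac{1}{2x}$, so the integrated contribution of the gaps should be $o(\log r)$; on the portions with $-x \in E$, the density $\tfrac{1}{2x}$ is close to sharp. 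Summing over the decomposition of $[1, r]$ into gap and interval parts yields $d_D(1, r) \le \tfrac12 \int_{[1, r]\cap E^*} \tfrac{dt}{t} + o(\log r) \le \bigl(\tfrac12\overline\Lambda(E^*) + \varepsilon\bigr)\log r$, as required.

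The main obstacle will be to carry out this decomposition quantitatively with uniform-in-$r$ control over the Beardon--Pommerenke error terms. The hypothesis $a_n^{1/n} \to \infty$ is exactly what makes this possible: it ensures that each gap $(b_{n-1}, a_n)$ is long on a logarithmic scale, so the Beardon--Pommerenke log factor is comfortably large, and it prevents boundary intervals from accumulating too rapidly. A second subtlety is the recovery of the exact constant $\tfrac12$ through the inequality $h_D \le C\cdot d_D$; depending on the size of $C$ from \cite{dH87, hBwS}, this may require tailoring the Harnack--hyperbolic passage to the particular symmetry of $u$ under reflection across the real axis, rather than invoking only the generic comparison. The corresponding upper bound for $\lambda(u)$ would follow from the same calculation run along a sequence $r_k \to \infty$ realising the liminf in $\underline\Lambda(E^*)$.
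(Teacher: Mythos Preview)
Your outline is correct and matches the paper's argument closely: the paper reduces to bounding $\rho_D(1,r)$ via the Harnack--hyperbolic comparison, integrates the hyperbolic density along the real segment $[1,r]$ using the crude slit-plane bound $d\rho_D(t)\le 1/(2t)$ on slightly enlarged intervals $[a_n/2,2b_n]$ and the Beardon--Pommerenke estimate on the complementary gaps, and shows the gap contribution is $o(\log r)$ via a short averaging lemma that exploits $a_n^{1/n}\to\infty$. Your concern about the comparison constant is unnecessary---the result of \cite{dH87,hBwS} (stated in the paper as Lemma~\ref{BearSmith}) gives $d_G\le\rho_G$ with constant exactly~$1$, so the factor $\tfrac12$ falls out directly and no symmetry-specific tailoring of the Harnack passage is required.
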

We note that some condition such as \eqref{an-cond} is essential here. Indeed, as mentioned earlier, Kjellberg showed that the functions $u_{\alpha,\beta}\in\K$ do not in general satisfy the identities in \eqref{order-exact}. He did this by proving that if  the parameters~$\alpha$ and~$\beta$ tend to 1 while $\frac12\log \alpha/\log \beta$ remains constant (so the intervals in the set~$E$ and their complementary intervals become ever slimmer while the logarithmic density of~$E^*$ remains fixed), then the order of $u_{\alpha,\beta}$ must tend to~1/2.

By using a precise harmonic measure estimate in multiply connected domains due to Sodin \cite{Sodin}, we can construct an even more extreme example to demonstrate this phenomenon.

\begin{example}\label{extreme-ex}
There exists a function $u\in \K$ with the set $E$ of the form
\[
E=\bigcup_{n\ge 0}[-b_n,-a_n],
\]
where $0\le a_0<b_0<a_1<b_1< \cdots,$ and $a_n \to \infty$ as $n\to\infty$, such that $\overline{\Lambda}(E^*)=0$,
\[
\rho(u)= \lambda(u)=1/2\quad\text{and moreover}\quad \lim_{r\to\infty}\frac{u(r)}{r^{1/2}}>0.
\]
\end{example}

In forthcoming work \cite{NRS20} we will use the results in this paper to construct entire functions of order 1/2, minimal type, with dynamically interesting properties related to their minimum modulus. Our next theorem is useful in any situation where we need to construct examples of order 1/2, minimal type.
\begin{theorem}\label{min-type}
Let $u\in \K$, with $E$ the corresponding closed subset of the negative real axis. If
\[E^c\supset\bigcup_{n\ge 0}(-d_n,-c_n),\]
where $0\le c_0<d_0<c_1<d_1< \cdots,$ and $\limsup_{n\to\infty} d_{n}/c_n>1$, then
\[
\frac{u(r)}{r^{1/2}}\to 0\;\;\text{as }r\to\infty.
\]
\end{theorem}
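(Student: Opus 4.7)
Since $u(r)/r^{1/2}$ is decreasing on $(0,\infty)$ by Theorem~\ref{basic-props}(a), the limit $L:=\lim_{r\to\infty}u(r)/r^{1/2}$ exists. My plan is to show $L=0$ by contradiction: assuming $L>0$, a rescaling centered on the gaps produces a limit function whose Herglotz representation on the right half plane is self-contradictory.

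Assume $L>0$. Since $\limsup d_n/c_n>1$, after passing to a subsequence we have $d_n/c_n\to\gamma\in(1,\infty]$ and $c_n\to\infty$ (otherwise $c_n$ and $d_n$ would share a finite limit, forcing $d_n/c_n\to 1$). Choose the scaling $r_n:=\sqrt{c_nd_n}$ if $\gamma<\infty$ and $r_n:=c_n$ if $\gamma=\infty$, so that the scaled gap $(-d_n/r_n,-c_n/r_n)$ contains a fixed open interval $I=(-\beta,-1)$ with $\beta>1$, for all large $n$. Let $u_n(z):=u(r_nz)/r_n^{1/2}$; each $u_n$ is non-negative subharmonic on $\C$, harmonic on $\C\setminus E_n$ with $E_n:=E/r_n\subset\R^-$, and by Theorem~\ref{basic-props}(a) satisfies $u_n(re^{i\theta})\le u_n(r)\le Cr^{1/2}$ uniformly in $n,\theta$. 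Take a thin tubular neighborhood $N$ of $\overline I$ with $N\cap\R^-\subset I$; since $E_n$ avoids $I$ for large $n$, each $u_n$ is harmonic and uniformly bounded on $N$. By Harnack's inequality and Arzelà--Ascoli I extract a subsequence with $u_n\to u_\infty$ locally uniformly on $\Omega:=N\cup(\C\setminus\R^-)$, where $u_\infty$ is non-negative harmonic. For every $r>0$, $u_n(r)=r^{1/2}\cdot u(r_nr)/(r_nr)^{1/2}\to Lr^{1/2}$, so $u_\infty(r)=Lr^{1/2}$ on $\R^+$.

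Now I derive the contradiction via Herglotz. Put $\tilde u_\infty(w):=u_\infty(w^2)$, which is non-negative harmonic on $\{\Real w>0\}$ and continuous up to the boundary near $w\in i\cdot(1,\sqrt\beta)$ (the preimage of $I$, where $u_\infty$ is continuous). Write
\[
\tilde u_\infty(w)=\alpha\Real w+\frac{1}{\pi}\int_{-\infty}^\infty\frac{\Real w\,d\mu(t)}{|w-it|^2},
\]
where $\alpha\ge 0$ and $\mu$ is a positive measure on $\R$ with density $u_\infty(-t^2)$ on $(1,\sqrt\beta)$. Evaluating at $w=r>0$ and using $\tilde u_\infty(r)=u_\infty(r^2)=Lr$, we obtain $\alpha r+(r/\pi)\int d\mu(t)/(r^2+t^2)=Lr$ for every $r>0$; letting $r\to\infty$ forces $\alpha=L$, and then $\int d\mu(t)/(r^2+t^2)=0$ for every $r>0$, so $\mu\equiv 0$ and therefore $u_\infty\equiv 0$ on $I$. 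The minimum principle applied to the non-negative harmonic function $u_\infty$ on the connected open set $N$ gives $u_\infty\equiv 0$ on $N$; real-analytic continuation from the open set $N\cap\Ha\subset\C\setminus\R^-$ into the connected domain $\C\setminus\R^-$ then yields $u_\infty\equiv 0$ on $\C\setminus\R^-$, contradicting $u_\infty(r)=Lr^{1/2}>0$ for $r>0$. Hence $L=0$.

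The principal obstacle is the compactness step: one must verify that the rescaled functions $u_n$ converge subsequentially to a limit $u_\infty$ harmonic on a strictly larger domain than $\C\setminus\R^-$, specifically across the limit interval $I$. The scaling choice keeps $I$ strictly inside a gap of $E_n$ for all large $n$, after which Harnack's inequality and Arzelà--Ascoli deliver the desired convergence; thereafter the Herglotz contradiction is a direct calculation.
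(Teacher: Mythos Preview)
Your argument is correct and takes a genuinely different route from both proofs in the paper. The paper's first proof applies Harnack's inequality in the annulus $\{c_n<|z|<d_n\}$ to obtain $u(r)\le K u(-r)$ on a sub-interval, whence $A(r,u)$ is unbounded, and then invokes Kjellberg's deep refinement of the Wiman--Heins theorem (Lemma~\ref{Wiman}). The second proof feeds the same Harnack comparison into the Poisson integral~\eqref{Poisson-half} for $u(z^2)$ and shows directly that the series $\sum u(\beta_n)/\beta_n^{1/2}$ converges. Your approach instead rescales so that the gaps stabilise, extracts a harmonic limit $u_\infty$ by normal-families compactness, and then reads off from the Herglotz representation that $u_\infty$ would have to equal $L\,\Re\sqrt z$ on $\C\setminus\R^-$; this function does not extend harmonically across any interval of $\R^-$, contradicting harmonicity of $u_\infty$ on the neighbourhood $N$ of $I$. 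The trade-offs: the paper's first proof is the shortest but imports a substantial external result; the second is direct and in principle quantitative; your compactness argument is self-contained and conceptually clean, avoiding Lemma~\ref{Wiman} entirely, though it is soft and gives no rate. One small remark: once you have $\alpha=L$ and $\mu\equiv 0$, the identity $u_\infty(z)=L\,\Re\sqrt z$ on $\C\setminus\R^-$ already gives the contradiction immediately, since $\Re\sqrt z$ fails to be harmonic across the cut; the separate appeal to the minimum principle on $N$ and analytic continuation is not needed.
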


Our final theorem is the result needed in the second stage of Kjellberg's process, which shows how to approximate a function $u\in\K$ by $\log |f|$, where $f$ is entire. This generalises the result given by Kjellberg in \cite[Chapter~4]{bK48} for his particular type of set $E$. Due to the much greater generality of the set~$E$ considered here, the proof is significantly more delicate, so we include full details.
\begin{theorem}\label{discretise}
Suppose that $u\in\K$ and
\[
E=\bigcup_{n\ge 0}[-b_n,-a_n],
\]
where $0\le a_0<b_0<a_1<b_1< \cdots,$ and $a_n \to \infty$ as $n\to\infty$. Put
\[
D_1=\C\setminus \{z: \text{{\rm dist}}(z,E)\le 1\}.
\]
Then there exists a {\tef}~$f$ with only negative zeros, all lying in the set~$E$, such that
\begin{equation}\label{R-est}
\log|f(z)|-u(z)=O\left(\log|z|\right) \;\;\text{as }z\to \infty, \qfor z\in D_1.
\end{equation}
Moreover, if we also have
\begin{equation}\label{d-cond}
b_n/a_n \ge d >1,\qfor n\ge 0,
\end{equation}
then there exists $R=R(u)>0$ such that
\begin{equation}\label{f-upper-est}
\log|f(z)|\le u(z)+4\log |z|, \qfor |z|\ge R.
\end{equation}
\end{theorem}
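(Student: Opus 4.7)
The plan is to follow Kjellberg's two-stage scheme: identify the Riesz measure of $u$, then replace it by a suitable discrete measure whose logarithmic potential is $\log|f|$. Let $\mu$ denote the Riesz measure of $u$, supported on $E \subset (-\infty, 0]$ and with no atoms (since $u$ is continuous). Set $N(t) = \mu([-t,0])$: this counting function is continuous, nondecreasing, constant on each complementary gap $(b_n, a_{n+1})$, and $N(t) = O(t^{1/2+\eps})$ for every $\eps > 0$, since $\rho(u) \le 1/2$ by Theorem~\ref{basic-props}. Since $u$ vanishes on $E$ and has order $\le 1/2$, the canonical Riesz representation
\[
u(z) = \int_0^\infty \log\bigl|1 + z/t\bigr|\, dN(t)
\]
holds as a convergent genus-zero integral, with no additional harmonic term (verified via the uniqueness of $u$ up to positive scalars in the class $\K$, noting that any harmonic correction would be a harmonic function on $\C$, symmetric about the real axis, that vanishes on~$E$, and hence is identically zero by the identity theorem).

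I next pick the zeros. Continuity of $N$ on each slit $[a_n, b_n]$ lets me choose points $-x_k \in (a_n, b_n]$ at which $N(x_k)$ takes each integer value in $(\lfloor N(b_{n-1})\rfloor, \lfloor N(b_n)\rfloor]$; the zeros of $f$ then all lie in $E$. Writing $n(t) = \#\{k: x_k \le t\}$, a direct verification shows $|n(t) - N(t)| \le 1$ for all $t \ge 0$, since on each slit $N$ takes consecutive integer values at the $x_k$, and on each gap both $n$ and $N$ are constant. Because $n(t) = O(t^{1/2+\eps})$, the product
\[
f(z) = \prod_k \bigl(1 + z/x_k\bigr)
\]
converges to a transcendental entire function as a genus-zero canonical product.

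For estimate~\eqref{R-est}, I express the difference as a Stieltjes integral and integrate by parts:
\[
\log|f(z)| - u(z) = \int_0^\infty \log\bigl|1+z/t\bigr|\, d(n-N)(t) = -\int_{a_0}^\infty (n-N)(t)\,\frac{d}{dt}\log\bigl|1+z/t\bigr|\, dt,
\]
the boundary terms vanishing by $(n-N)(0) = 0$ and $\log|1+z/t| \to 0$ at infinity. Using $|d_t \log|1+z/t|| \le |z|/(t|z+t|)$ together with $|n - N| \le 1$, I split the integration domain at $|z|/2$ and $2|z|$. On $[a_0, |z|/2]$ the bound $|z+t| \ge |z|/2$ gives $O(\log|z|)$, and on $[2|z|, \infty)$ the bound $|z+t| \ge t/2$ gives $O(1)$. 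The central range $[|z|/2, 2|z|]$ is the main obstacle, because $|z+t|$ can be small when $z$ is close to the negative real axis. Here I invoke the hypothesis $z \in D_1$, which forces $|z + t| > 1$ whenever $-t \in E$, combined with the structure of $n - N$: on each gap $(b_m, a_{m+1})$ the function $n - N$ is constant, so the integral over that gap reduces to an explicit difference of logarithms of the distances from $-z$ to its endpoints, each at least~$1$, yielding $O(\log|z|)$; on each slit $[a_m, b_m]$, $n - N$ oscillates with zero mean over every sub-interval $[x_{k-1}, x_k]$ between consecutive zeros, providing the cancellation that absorbs the remaining singular contribution.

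For the refined upper bound~\eqref{f-upper-est} under the thickness hypothesis~\eqref{d-cond}, I promote this two-sided estimate on $D_1$ to a one-sided bound valid everywhere. For $z$ with $\text{dist}(z, E) \le 1$ and $|z| \ge R$, the condition $b_n/a_n \ge d > 1$ ensures that a suitable circle $\{|w-z| = \rho\}$ with $\rho$ of order one lies entirely in $D_1$, because the thickness of the slits prevents $E$ from approaching $z$ from several directions within distance~$1$. Applying the sub-mean-value inequality to the subharmonic function $\log|f|$ and then~\eqref{R-est} on this circle gives
\[
\log|f(z)| \le \frac{1}{2\pi}\int_0^{2\pi} \log|f(z+\rho e^{i\theta})|\, d\theta \le \frac{1}{2\pi}\int_0^{2\pi} u(z+\rho e^{i\theta})\, d\theta + O(\log|z|),
\]
and a Harnack comparison on $D$, available because $u$ is positive harmonic there, bounds the circle average by $u(z) + O(\log|z|)$; for $|z| \ge R(u)$ large, the cumulative error is at most $4\log|z|$, giving~\eqref{f-upper-est}. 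The deepest point of the whole argument is the central-range analysis in~\eqref{R-est}, where both the $D_1$ hypothesis and the oscillatory cancellation of $n - N$ must combine to beat the logarithmic divergence of the naive pointwise bound.
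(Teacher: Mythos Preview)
Your overall setup (Riesz representation, discretisation to $|n-N|\le 1$, integration by parts) matches the paper, and your outer-range estimates are fine. But there are two genuine gaps.

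\textbf{The central range in \eqref{R-est}.} Your slit/gap decomposition on $[|z|/2,2|z|]$ does not close. First, on a gap $(b_m,a_{m+1})$ you correctly get a constant times $\log|1+z/a_{m+1}|-\log|1+z/b_m|=O(\log|z|)$, but there is no bound on the \emph{number} of gaps meeting $[|z|/2,2|z|]$ (the theorem assumes only $a_n\to\infty$), so summing gives $O(k\log|z|)$ with $k$ uncontrolled. Second, your ``zero mean'' claim on slits is false: on $[x_{k-1},x_k)$ one has $n-N$ running from $0$ down to $-1$, so $\int_{x_{k-1}}^{x_k}(n-N)\,dt<0$ in general, and no cancellation mechanism is exhibited. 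The paper avoids all of this by bounding $|I_2(z)|\le\int_{x_1}^\infty|d\log|1+z/t||$, i.e.\ by the \emph{total variation} of the kernel, which has a closed form: the map $t\mapsto\log|1+z/t|$ is monotone for $\Re z\ge 0$ and unimodal (decreasing then increasing, with turning point at $t=|z|^2/|x|$) for $\Re z<0$, so the total variation is $\log|1+z/x_1|-2\log(|y|/|z|)$. This handles everything except $z$ in the strip $|y|<1/2$ in the left half-plane; there the paper notes that such $z\in D_1$ must lie over a gap $(b_n,a_{n+1})$, uses that $d(\mu-[\mu])$ vanishes on this gap to split $I_2$ as integrals over $[x_1,b_n]$ and $[a_{n+1},\infty)$, and again computes total variations, now with the logarithms $\log|1+z/b_n|$ and $\log|1+z/a_{n+1}|$ controlled via $|z+b_n|,|z+a_{n+1}|\ge 1$.

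\textbf{The upper bound \eqref{f-upper-est}.} Your circle argument cannot work: if $z$ lies on the real axis in the interior of a long slit $[-b_n,-a_n]$ (and the thickness hypothesis $b_n/a_n\ge d>1$ makes such slits arbitrarily long), then every circle $\{|w-z|=\rho\}$ meets the $1$-neighbourhood of $E$ at $z\pm\rho$, so no circle of bounded radius lies in $D_1$. Moreover, $u(z)=0$ for $z\in E$, so a Harnack comparison to $u(z)$ is meaningless there. The paper's route is quite different: it first shows, using Beurling's lemma and the thickness hypothesis, that $u\le C_3$ on the whole $1$-neighbourhood of $E$ (roughly, $u$ must grow like a square root from each point of $E$, but $u$ is globally $O(r^{1/2})$, forcing $u$ to be bounded near $E$). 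Then, since $v(z)=u_1(z)-3\log|z|-C_2\le u(z)\le C_3$ on $\partial D_1$ by the first part, the maximum principle pushes $v\le C_3$ into each bounded complementary component of $D_1$, yielding \eqref{f-upper-est}.
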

Since the work of Kjellberg there have been many results on the approximation of subharmonic functions by logarithms of moduli of entire functions; see, for example, \cite[Chapter~10]{wH89} for subharmonic functions whose Riesz measure lies on a finite number of unbounded curves,  \cite{Y} and \cite{dD01} for functions subharmonic in $\mathbb C$ of finite order, and \cite{LM01} and \cite{FR14} for functions subharmonic in $\mathbb C$ of infinite order. These works all give estimates of the form \eqref{R-est} either outside certain exceptional sets or on average in a certain sense, with various error bounds, but we are not aware of earlier results that provide the type of control of the entire function~$f$ given in \eqref{R-est} and \eqref{f-upper-est} simultaneously.

Indeed, the two estimates in Theorem~\ref{discretise} enable us to use any subharmonic function $u \in\K$ to obtain an entire function $f$ with the same order, lower order and type class as~$u$, and also with the property that~$\log|f|$ is uniformly bounded by~$u$, provided \eqref{d-cond} holds.

Finally, we recall that one aim of Kjellberg's work in \cite{bK48} was to show that a certain density estimate appearing in an early version of the $\cos \pi \rho$ theorem is best possible. Recall the strong version of the $\cos \pi \rho$ theorem due to Barry (see \cite{Ba63} or \cite[Theorem 6.13]{wH89}), which states that if~$u$ is a non-constant subharmonic function of order $\rho\in [0,1)$ and $\rho<\alpha<1$, then
\begin{equation}\label{Barry-order}
\underline{\Lambda}(\{r: A(r,u)>\cos (\pi\alpha) B(r,u)\}) \ge 1-\rho/\alpha.
\end{equation}
Kjellberg's examples in \cite{bK48} show that if $0\le \rho <\alpha =1/2$, then the logarithmic density of the set in \eqref{Barry-order} can be arbitrarily close to the quantity $1-\rho/\alpha= 1-2\rho$, thus demonstrating that the inequality in \eqref{Barry-order} for the lower logarithmic density is best possible in the case $\alpha=1/2$.

Theorem~\ref{exactorder} shows that for a given $\rho<1/2$ and $\alpha=1/2$ the value $1-\rho/\alpha=1-2\rho$ for the logarithmic density can in fact be {\it attained} by a subharmonic function~$u$ of order~$\rho$. Moreover, Theorem~\ref{discretise} allows us to use such a subharmonic function to construct an entire function with the same properties.

\begin{corollary}\label{entire-density}
For each $\rho$, $0\le \rho<1/2$, there is a {\tef} $f$ of order $\rho$ such that
\[
\underline{\Lambda}(\{r: A(r,\log|f|)>0)\}) = \overline{\Lambda}(\{r: A(r,\log|f|)>0)\}) = 1-2\rho.
\]
\end{corollary}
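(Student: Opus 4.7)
The plan is to combine Theorem~\ref{exactorder} and Theorem~\ref{discretise} by choosing a set $E \subset (-\infty,0]$ with logarithmic density exactly $2\rho$ and good lacunarity, producing a \tef{} $f$ of order $\rho$ for which the density statement transfers from the associated $u$ to $\log|f|$. Concretely, take $E = \bigcup_{n\ge 1}[-b_n,-a_n]$ satisfying (a)~$a_n^{1/n}\to\infty$ (the hypothesis of Theorem~\ref{exactorder}), (b)~$b_n/a_n \ge d > 1$ (hypothesis~\eqref{d-cond} of Theorem~\ref{discretise}), and (c)~$\overline{\Lambda}(E^*) = \underline{\Lambda}(E^*) = 2\rho$. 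For $\rho > 0$, setting $\log(b_n/a_n) = 2\rho n$ and $\log(a_{n+1}/b_n) = (1-2\rho)n$ gives $\log a_n \sim n^2/2$, so (a) holds and $\Lambda(E^*) = 2\rho$; for $\rho = 0$, take $b_n/a_n = 2$ with $\log(a_{n+1}/b_n) = n$, yielding the same growth of $\log a_n$ but $\Lambda(E^*) = 0$.

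Let $u\in\K$ be the function corresponding to $E$. By Theorem~\ref{exactorder}, $\rho(u) = \lambda(u) = \rho$. Then Theorem~\ref{discretise} produces a \tef{} $f$ with zeros only in $E$, satisfying $\log|f(z)| - u(z) = O(\log|z|)$ for $z \in D_1$ and $\log|f(z)| \le u(z) + 4\log|z|$ for $|z|$ large. Applying these on the positive real axis (which lies in $D_1$ since $E \subset (-\infty,0]$) yields $\log M(r,f) = u(r) + O(\log r)$, so $f$ has order $\rho$. Writing $S = \{r : A(r,\log|f|) > 0\}$, the lower bound $\underline{\Lambda}(S) \ge 1-2\rho$ is immediate from Barry's theorem~\eqref{Barry-order} applied to $\log|f|$ with $\alpha = 1/2$ (where $\cos(\pi\alpha) = 0$).

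For the matching upper bound $\overline{\Lambda}(S) \le 1-2\rho$, I would show that $E^* \subseteq S^c$ up to a logarithmically null set, so $S$ sits essentially inside $\R^+ \setminus E^*$, a set of logarithmic density $1-2\rho$. For $r$ in the interior of an interval $[a_n,b_n]$ one has $A(r,\log|f|) \le \log|f(-r)|$; since $f$ has order less than $1/2$ it factors (up to a polynomial) as a canonical product $f(z) = c\prod_k(1-z/z_k)$ with $z_k = -r_k$, so
\[
\log|f(-r)| = \log|c| + \sum_k \log|1 - r/r_k|.
\]
The zeros $r_k \in [a_n,b_n]$, placed by the Riesz-measure discretization at approximate spacing $s_n = (b_n - a_n)/N_n$ (with $N_n$ their number), contribute to this sum by a Stirling-type estimate for $\log\Gamma$ a value of order $-N_n\log 2$ at points strictly between consecutive zeros, becoming arbitrarily negative near individual zeros. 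The lacunarity $a_n^{1/n}\to\infty$ makes the contributions from intervals $[a_j,b_j]$ with $j \ne n$ of strictly lower order, so $\log|f(-r)|$ is very negative on $[a_n,b_n]$ outside a neighbourhood of the zeros of total log-length $o(1)$ per interval.

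The main obstacle is this last step: making the Stirling-type estimate rigorous for the precise zero set produced by Theorem~\ref{discretise} (whose Riesz density need not be exactly constant on each $[a_n,b_n]$), and verifying that the negative in-interval contribution genuinely dominates the cross-interval terms; here the super-exponential growth of the $a_n$ afforded by the lacunarity hypothesis is essential.
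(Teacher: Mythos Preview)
Your overall architecture matches the paper's: choose $E$ with $\Lambda(E^*)=2\rho$ and the lacunarity needed for Theorem~\ref{exactorder}, apply Theorem~\ref{discretise}, get the lower bound $\underline\Lambda(S)\ge 1-2\rho$ from Barry, and finish by showing $S$ essentially avoids $E^*$ so that $\overline\Lambda(S)\le 1-2\rho$.

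Where you diverge is in the last step, and you make it much harder than necessary. The paper does not estimate $\log|f(-r)|$ by any Stirling-type analysis of the discretised zeros. Instead it simply \emph{deletes a handful of zeros} of $f$. Writing $f(z)=C\prod_{k\ge1}(1+z/x_k)$ and replacing it by $\tilde f(z)=\prod_{k\ge 6}(1+z/x_k)$, one has, for $|z|$ large,
\[
\log|\tilde f(z)|=\log|f(z)|-\sum_{k=1}^{5}\log|1+z/x_k|\le u(z)+4\log|z|-5\log|z|+O(1)\le u(z).
\]
Since $u\equiv 0$ on $E$, this gives $A(r,\log|\tilde f|)\le \log|\tilde f(-r)|\le u(-r)=0$ for every $r\in E^*$ with $r\ge R$, so $S\cap E^*$ is bounded and the containment $S\subset (\R^+\setminus E^*)\cup[0,R]$ is immediate. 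Removing finitely many zeros does not change the order, so $\tilde f$ still has order $\rho$.

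This bypasses entirely the ``main obstacle'' you flag. Your direct approach is not wrong in spirit---one can indeed show $\log|f(-r)|$ is nonpositive on most of $E^*$---but carrying it out rigorously would essentially amount to reproving the second half of Theorem~\ref{discretise} with a sharper constant, whereas the zero-removal trick gets the needed inequality $\log|\tilde f|\le u$ for free from the existing statement \eqref{f-upper-est}.
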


The structure of the paper is as follows. In Section~\ref{prelim} we state a number of key results that are needed in our proofs. Then we prove Theorem~\ref{basic-props} in Section~\ref{lower-estimate-proof}, Theorem~\ref{exactorder} in Section~\ref{upper-estimate-proof}, Example~\ref{extreme-ex} in Section~\ref{example-proof}, Theorem~\ref{min-type} in Section~\ref{min-type-result}, and finally Theorem~\ref{discretise} and Corollary~\ref{entire-density} in Section~\ref{Kjellbergproof}.

{\it Acknowledgements}\; The authors thanks Dan Nicks and Ian Short for helpful comments.

\section{Preliminary results}\label{prelim}
\setcounter{equation}{0}

Our results depend on two entirely different techniques that will enable us to estimate the growth of functions in class $\K$ from below and from above. The first is a lemma of Beurling~\cite[page~95]{aB33}, which gives estimates of growth from below and was a key tool in~\cite{bK48}, and in many other papers. Recall that for any subharmonic function~$u$ we write
\[
B(r,u)=\max_{|z|=r}u(z),\;\text{where } r>0.
\]

\begin{lemma}\label{Beur}
If~$u$ is subharmonic in~$\C$, $0<r_1<r_2$, and
\[
E(r_1,r_2)=\{r\in [r_1,r_2]:\inf_{|z|=r}u(z)\le 0\},
\]
then
\begin{equation}\label{Beur-est}
B(r_2,u)> \frac12 \exp\left(\frac12\int_{E(r_1,r_2)} \frac{dt}{t}\right)B(r_1,u).
\end{equation}
In particular, if $E(r_1,r_2)=[r_1,r_2]$, then
\[
B(r_2,u) > \frac12 \left(\frac{r_2}{r_1}\right)^{1/2} B(r_1,u).
\]
\end{lemma}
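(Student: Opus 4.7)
The plan is to follow Beurling's classical argument (the lemma is from his 1933 thesis), which proceeds via a two-constants/harmonic-measure theorem in a slit disk combined with a circular projection to reduce to an explicit extremal problem governed by the order-$1/2$ function $\Re(z^{1/2})$.

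First, I would reduce to the case $u\ge 0$ by replacing $u$ with $u^+=\max(u,0)$: this is still subharmonic, preserves $E(r_1,r_2)$, and leaves $B(r,u)$ unchanged whenever $B(r,u)\ge 0$. One may also assume $B(r_1,u)>0$, as otherwise the inequality is trivial. Set $Z=\{z:u(z)=0\}$; this closed set meets every circle $\{|z|=r\}$ for $r\in E(r_1,r_2)$. The maximum principle in the slit disk $\{|z|<r_2\}\setminus Z$, together with $u\le B(r_2,u)$ on $\{|z|=r_2\}$ and $u=0$ on $Z$, gives
$$u(z)\le B(r_2,u)\cdot\omega\bigl(z,\{|\zeta|=r_2\},\,\{|z|<r_2\}\setminus Z\bigr).$$
Evaluating at a point $z_1$ with $|z_1|=r_1$ and $u(z_1)=B(r_1,u)$ and rearranging yields a lower bound on this harmonic measure of the form $B(r_1,u)/B(r_2,u)$.

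The heart of the proof is then a matching upper bound on the harmonic measure, from which the factor $\exp(-\tfrac12\int_E dt/t)$ emerges. I would apply Beurling's circular projection theorem to reduce to the case where the blocking set lies on the negative real axis and contains $\{-r:r\in E(r_1,r_2)\}$. After the conformal change of variables $w=\sqrt{z}$ (with the standard branch cut), the problem becomes one of estimating a harmonic measure in a half-plane with slits on the imaginary axis, which can be handled by an explicit Poisson-kernel computation. Equivalently, a direct comparison with the extremal subharmonic function $\Re(z^{1/2})$ (positive harmonic in the cut plane, vanishing on $(-\infty,0]$, and growing like $r^{1/2}$) produces
$$\omega\bigl(z_1,\{|\zeta|=r_2\},\,\{|z|<r_2\}\setminus Z\bigr)\le 2\exp\!\Bigl(-\tfrac12\int_{E(r_1,r_2)}\frac{dt}{t}\Bigr),$$
and combining with the lower bound yields the stated inequality.

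The main obstacle is that $Z$ may have small logarithmic capacity: a priori, $\inf_{|z|=r}u\le 0$ only forces a single point of $Z$ on each such circle, and single points have zero capacity, so the naive harmonic-measure argument fails as stated. The standard workaround is to replace $Z$ by the open set $Z_\eps=\{u<\eps\}$, which by upper semi-continuity contains a genuine neighborhood of each zero of $u$; its projection onto the negative real axis then contains a small interval around each point $-r$ with $r\in E(r_1,r_2)$, so the explicit estimates above apply, and one lets $\eps\to 0^+$. The absolute factor $\tfrac12$ in front of the exponential reflects the numerical constant arising from this comparison (from the normalization of $\Re(z^{1/2})$ on the outer circle). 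The ``in particular'' case $E(r_1,r_2)=[r_1,r_2]$ then follows immediately from $\int_{r_1}^{r_2}dt/t=\log(r_2/r_1)$.
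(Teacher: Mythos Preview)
The paper does not give its own proof of this lemma. It is stated in Section~2 as a preliminary result and attributed directly to Beurling's 1933 thesis \cite[page~95]{aB33}, with no argument supplied; the paper merely uses it as a black-box tool (in Sections~3 and~7). So there is nothing in the paper to compare your proposal against.

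That said, your sketch is a fair outline of the classical Carleman--Milloux--Beurling argument: reduce to $u^+$, apply the two-constants theorem in $\{|z|<r_2\}\setminus\{u=0\}$, invoke Beurling's circular projection to push the obstructing set onto the negative real axis, and then compare with the extremal harmonic function $\Re(z^{1/2})$ in the slit plane to extract the factor $\exp\bigl(\tfrac12\int_E dt/t\bigr)$. You also correctly flag the capacity issue (a single zero on each circle does not obstruct harmonic measure) and the standard fix via the open sets $\{u<\varepsilon\}$. One small caution: the step ``its projection onto the negative real axis then contains a small interval around each point $-r$ with $r\in E(r_1,r_2)$'' is not quite how the $\varepsilon$-argument runs---the projected set need not contain intervals about \emph{every} such $-r$, but it is relatively open and covers $E(r_1,r_2)$ up to a set of measure $o(1)$ as $\varepsilon\to 0$, which is all one needs to pass to the limit in the integral. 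With that adjustment the outline is sound, though of course each of the named ingredients (Beurling's projection theorem, the explicit harmonic-measure estimate in the slit disk) is itself a substantial result whose proof you would need to supply or cite.
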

The second technique, which will give us estimates from above, needs more preparation; we are not aware of this technique being used previously to estimate the growth of positive harmonic functions from above in multiply connected domains.

The {\it Harnack metric} is defined in a domain $G$ by
\begin{equation}\label{Har-metric}
d_G(z_1,z_2)=\sup\{|\log (u(z_2)/u(z_1))|: u \text{ is positive and harmonic in } G\},
\end{equation}
where $z_1,z_2\in G$. This concept was introduced by Bear \cite{hB65} and named the Harnack metric by K\"onig \cite{hK69}. The Harnack metric in~$G$ has a close relationship with the hyperbolic metric in $G$, which we denote by $\rho_G$. Indeed, if $G$ is simply connected then these two metrics are identical, provided that $\rho_G$ is normalised so that the hyperbolic density in the unit disc $\D$ is $d\rho_{\D}(z)=2/(1-|z|^2$), or equivalently $d\rho_{\Ha}(z)=1/\Re (z)$ in the right-half plane $\Ha$.

In \cite{dH87}, and also \cite{hBwS}, the relationship between the two metrics when $G$ is multiply connected was investigated and, amongst other results, the following was obtained; see \cite[Theorem~6]{dH87} and \cite[Theorem~1.1]{hBwS}.

\begin{lemma}\label{BearSmith}
Let $G$ be a domain with Harnack metric $d_G$. Then
\[
d_G(z_1,z_2)\le \rho_G(z_1,z_2),\qfor z_1,z_2\in G.
\]
\end{lemma}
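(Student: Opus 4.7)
The plan is to reduce the multiply connected case to the classical simply connected case by passing to the universal cover of~$G$, together with the standard fact that the covering projection is a local isometry for the hyperbolic metric.

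First I would dispose of the simply connected (hyperbolic) case. By the Riemann mapping theorem and conformal invariance of both $d_G$ and $\rho_G$, it suffices to treat $G=\D$. The sharp Harnack inequality on the disc,
\[
\frac{1-|z|}{1+|z|}u(0)\le u(z)\le \frac{1+|z|}{1-|z|}u(0),
\]
valid for every positive harmonic $u$ on $\D$, yields $|\log(u(z)/u(0))|\le \log\frac{1+|z|}{1-|z|}=\rho_\D(0,z)$. Pre-composing with M\"obius automorphisms of $\D$ gives $|\log(u(z_2)/u(z_1))|\le\rho_\D(z_1,z_2)$ at an arbitrary pair of points, and taking the supremum over $u$ yields $d_\D\le\rho_\D$ (in fact equality, attained by suitable Poisson kernels). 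The degenerate cases $G=\C$ and $G=\C\setminus\{\text{point}\}$ admit only constant positive harmonic functions by the classical Liouville-type theorems, so there $d_G\equiv 0$ and the inequality is trivial.

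For general (hyperbolic) $G$, let $\pi\colon\wt G\to G$ be a universal covering map with $\wt G$ simply connected. For any positive harmonic $u$ on $G$, the lift $\wt u:=u\circ\pi$ is positive harmonic on $\wt G$, since harmonicity is a local property and positivity transfers. By the definition of $\rho_G$ via the universal cover, $\pi$ is a local isometry, so for any piecewise smooth path $\gamma\colon[0,1]\to G$ with $\gamma(0)=z_1$, $\gamma(1)=z_2$, if we fix $w_1\in\pi^{-1}(z_1)$ and let $w_2$ be the endpoint of the lift $\wt\gamma$ of $\gamma$ starting at $w_1$, then $\ell_{\wt G}(\wt\gamma)=\ell_G(\gamma)$. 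Applying the simply connected case to $\wt u$ on $\wt G$ gives
\[
|\log(u(z_2)/u(z_1))|=|\log(\wt u(w_2)/\wt u(w_1))|\le\rho_{\wt G}(w_1,w_2)\le\ell_{\wt G}(\wt\gamma)=\ell_G(\gamma).
\]
Taking the infimum over $\gamma$ yields $|\log(u(z_2)/u(z_1))|\le\rho_G(z_1,z_2)$, and then the supremum over positive harmonic $u$ on $G$ delivers $d_G(z_1,z_2)\le\rho_G(z_1,z_2)$.

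The main point requiring care is the simply connected base case, specifically the identification of the best constant in Harnack's inequality on $\D$ with the hyperbolic distance; the rest is standard covering-space bookkeeping (lifts of harmonic functions are harmonic, path lengths are preserved under $\pi$, positivity is inherited). A minor subtlety is that the inequality $\rho_G(z_1,z_2)\le\ell_G(\gamma)$ uses only \emph{one} admissible lift $w_2$ rather than the optimal one, but since we take the infimum over all paths $\gamma$ in~$G$, every homotopy class of paths is represented and the path argument automatically captures the sharpest bound.
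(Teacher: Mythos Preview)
Your proof is correct. The paper does not actually prove this lemma but cites it from \cite{dH87} and \cite{hBwS}; your argument---sharp Harnack on $\D$ giving $d_\D=\rho_\D$, then lifting to the universal cover and using that $\pi$ is a local hyperbolic isometry---is the standard one and is consistent with the paper's normalisation $d\rho_\D(z)=2/(1-|z|^2)$.
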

It follows that we can make good estimates for the growth of a positive harmonic function in a domain whenever we can obtain good estimates for the hyperbolic metric in that domain. To do this we shall use the following result of Beardon and Pommerenke; see \cite[Theorem~1]{aBcP}.
\begin{lemma}\label{BeardPomm}
Let~$G$ be a domain in $\C$ that omits at least two finite points. Then the hyperbolic density in $G$ satisfies
\[
d\rho_G(z) \le \frac{\pi/2}{{\rm dist}\,(z,\partial G)\beta_G(z)},
\]
where
\[
\beta_G(z)=\inf\{\left|\log|z-a|/|b-a|\right|:a,b\in \partial G,|z-a|={\rm dist}\,(z,\partial G)\}.
\]
\end{lemma}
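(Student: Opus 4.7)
My plan is to combine monotonicity of the hyperbolic density under subdomain inclusion with an explicit estimate for the density of the twice-punctured plane $\Omega_0:=\C\setminus\{0,1\}$. Fix $z\in G$ and let $a\in\partial G$ be a nearest boundary point, so $|z-a|=\operatorname{dist}(z,\partial G)=:d$. Since $G$ omits at least two finite points, $\partial G$ contains some further point $b$, and then $G\subset\C\setminus\{a,b\}$, which by standard monotonicity of the hyperbolic density gives
\[
d\rho_G(z)\le d\rho_{\C\setminus\{a,b\}}(z).
\]
The M\"obius transformation $\zeta\mapsto(\zeta-a)/(b-a)$ is a conformal isomorphism $\C\setminus\{a,b\}\to\Omega_0$ sending $z$ to $w:=(z-a)/(b-a)$, with $|w|=d/|b-a|$. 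Writing $\lambda_0$ for the hyperbolic density of $\Omega_0$, the change-of-variables factor $1/|b-a|$ yields
\[
d\rho_G(z)\le\frac{1}{|b-a|}\,\lambda_0(w).
\]

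The heart of the proof is then the explicit density estimate
\[
\lambda_0(w)\le\frac{\pi/2}{|w|\,\bigl|\log|w|\bigr|},
\]
which I would obtain by lifting via the covering map $\exp\colon\C\setminus 2\pi i\Z\to\Omega_0$. A horizontal strip $S$ of imaginary width $2\pi$ embeds in $\C\setminus 2\pi i\Z$ as a simply connected subdomain, and its hyperbolic density can be computed explicitly by conformal mapping $S$ to a half-plane, giving an expression of the form $1/\bigl(2\sin(\Im\zeta/2)\bigr)$ for $\zeta\in S$. Pushing this bound down to $\Omega_0$ via $\exp$ and then optimising the position of the strip relative to the point $\zeta=\log w$ (equivalently, the choice of branch of the logarithm) yields, after a short calculation, the desired inequality. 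Substituting back and using $|w|=d/|b-a|$ then gives
\[
d\rho_G(z)\le\frac{\pi/2}{d\cdot\bigl|\log(d/|b-a|)\bigr|}=\frac{\pi/2}{\operatorname{dist}(z,\partial G)\cdot\bigl|\log(|z-a|/|b-a|)\bigr|},
\]
and taking the infimum over admissible $b\in\partial G$ produces $\beta_G(z)$ in the denominator, completing the proof.

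The main obstacle is securing the constant $\pi/2$ in the density estimate on $\Omega_0$. A crude strip comparison yields only a bound of order $1/|w|$, missing the logarithmic gain for $w$ near $0$; conversely, the leading-order asymptotic $\lambda_0(w)\sim 1/(|w|\log(1/|w|))$ at $0$ does not by itself provide a global bound valid away from the cusp. The required inequality must be uniform across a wide range of $|w|$, and pinning down the precise constant $\pi/2$ hinges on a careful optimisation of the strip covering, or alternatively on working directly with the universal cover of $\Omega_0$ by the upper half-plane via the modular $\lambda$-function and controlling $\lambda$ over an entire fundamental domain rather than just near its cusp. Once this density bound is established, assembling it with the monotonicity reduction and the infimum over $b$ is entirely routine.
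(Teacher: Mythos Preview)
Your argument has a fatal error at the very first step: the monotonicity of the hyperbolic density runs the \emph{opposite} way. If $G\subset H$ then the inclusion is a Schwarz--Pick contraction, so $d\rho_G\ge d\rho_H$ on $G$ (the smaller domain has the larger density). Thus from $G\subset\C\setminus\{a,b\}$ you obtain
\[
d\rho_G(z)\;\ge\;d\rho_{\C\setminus\{a,b\}}(z),
\]
which is a \emph{lower} bound for $d\rho_G$, not the upper bound the lemma asserts. In fact the comparison with the twice-punctured plane is exactly how Beardon and Pommerenke obtain their companion \emph{lower} estimate; it cannot yield the stated upper estimate. Your final sentence about ``taking the infimum over admissible $b$'' is also a symptom of this reversal: passing to the infimum in the denominator would make an upper bound worse, not better, so if your inequality really went the way you wrote, one would take a supremum over $b$.

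The paper does not prove this lemma; it simply quotes \cite[Theorem~1]{aBcP}. The actual Beardon--Pommerenke argument for the upper bound uses the inclusion in the \emph{other} direction: with $a$ a nearest boundary point and $\beta=\beta_G(z)$, every $b\in\partial G$ satisfies $\bigl|\log(|z-a|/|b-a|)\bigr|\ge\beta$, so the round annulus
\[
A=\{\,w:\,|z-a|e^{-\beta}<|w-a|<|z-a|e^{\beta}\,\}
\]
contains no point of $\partial G$ and hence lies in $G$. Monotonicity now gives $d\rho_G(z)\le d\rho_A(z)$, and since $z$ lies on the centre circle of $A$ one computes directly (via the strip covering $w\mapsto a+e^{w}$) that $d\rho_A(z)=\pi/(2\,{\rm dist}(z,\partial G)\,\beta)$ in the paper's normalisation. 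This also explains why $\beta_G$ is an infimum: it is the largest $\beta$ for which the annulus $A$ is guaranteed to sit inside $G$. Your proposed detour through the modular $\lambda$-function is therefore unnecessary; once the inclusion is set up correctly, the density of an annulus on its core curve is elementary.
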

Note that the constant $\pi/2$ in Lemma~\ref{BeardPomm} appears as $\pi/4$ in \cite[Theorem~1]{aBcP} due to the different normalisation of the hyperbolic density in \cite{aBcP}.

In our situation, where we are estimating the growth of functions in class~$\K$, the domains where the positive harmonic functions are defined always have their boundaries lying entirely in the negative real axis, so we can make use of the following special case of a deep result of Weitsman; see \cite[Theorem~9.16]{wH89}.
\begin{lemma}\label{Weit}
Let $G$ be a domain in $\C$ whose complement contains at least two points, and suppose that $G$ has the symmetry property that for each $r>0$ the set $\{z\in G:|z|=r\}$ is either a circle or is of the form $\{re^{i\theta}: |\theta|< \pi\}$. Then $d\rho_G(re^{i\theta})$ is an increasing function of $|\theta|$ for $0\le |\theta|<\pi$.
\end{lemma}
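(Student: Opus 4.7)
Since this is cited as a deep result of Weitsman via Hayman \cite[Theorem~9.16]{wH89}, my plan is to sketch the underlying strategy rather than produce a fully self-contained derivation. The first step is to observe that the symmetry hypothesis forces $\C\setminus G\subset(-\infty,0]$: on every radius $r$ on which not the full circle lies in $G$, only the single point $-r$ can be missing, so every boundary point lies on the negative real axis. Hence $G=\C\setminus E$ for some closed $E\subset(-\infty,0]$, and in particular $G$ is symmetric under complex conjugation, giving $d\rho_G(re^{i\theta})=d\rho_G(re^{-i\theta})$; it therefore suffices to prove the monotonicity on $[0,\pi)$.

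For the monotonicity itself, fix $r>0$ and $0\le\theta_1<\theta_2<\pi$, set $\phi=(\theta_1+\theta_2)/2$, and let $\sigma$ denote reflection in the line $L$ through the origin at angle $\phi$, so that $\sigma(re^{i\theta_2})=re^{i\theta_1}$. Because $E\subset(-\infty,0]$ lies entirely in a single one of the two half-planes cut off by $L$ (the one containing the direction $\pi$), the plan is to apply a polarization argument of Dubinin--Solynin type: polarizing $E$ across $L$ in the direction away from $re^{i\theta_1}$ produces a domain for which the standard one-sided hyperbolic density comparison applies at the two radial points, and combining this with the isometry $\sigma$ yields
\[
d\rho_G(re^{i\theta_1})\le d\rho_G(re^{i\theta_2}).
\]
A cleaner alternative, and the route taken in Hayman's Chapter~9, is the Baernstein $*$-function approach: express $d\rho_G$ in terms of the derivative of a universal cover $\Pi:\D\to G$, apply the $*$-rearrangement to the subharmonic function $\log|\Pi'|$, and read off the desired monotonicity in $|\theta|$ from the circular symmetry of $G$ together with the subharmonicity of the $*$-function.

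The main obstacle is orientation: both routes yield only one-sided inequalities, and one must check that the particular configuration here---namely $E$ confined to the single ray $(-\infty,0]$ rather than spread across both half-planes cut by $L$---is precisely the case that makes the inequality point in the right direction. This subtlety is the reason Weitsman's theorem is regarded as deep; in the present paper it enters only as a technical input that, together with Lemmas~\ref{BearSmith} and~\ref{BeardPomm}, supplies the required upper bounds on the Harnack metric in the multiply connected slit domain $G=\C\setminus E$.
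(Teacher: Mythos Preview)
The paper does not give its own proof of this lemma at all: it is quoted verbatim as a special case of Weitsman's theorem and the reader is simply referred to \cite[Theorem~9.16]{wH89}. So there is no argument in the paper to compare against. You correctly recognise this and, rather than pretending to a self-contained proof, outline the two standard strategies (polarization in the style of Dubinin--Solynin, or Baernstein's $*$-function machinery applied through the universal cover). That is an honest and appropriate response to a result that the paper itself treats as a black box.

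One small caution on the polarization sketch: when $E$ already lies entirely on one side of the bisecting line $L$, polarizing $E$ across $L$ leaves $E$ fixed, so the comparison does not come for free from the ``polarize and compare'' step alone; one still has to invoke the general polarization inequality for the hyperbolic density (at a pair of mirror points) and then observe that the polarized domain coincides with $G$. This is exactly the ``orientation'' subtlety you flag at the end, and it is the substantive content of Weitsman's theorem. Your sketch points in the right direction but, as you say, does not replace the cited reference---nor does the paper expect it to.
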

It follows easily from Lemma~\ref{Weit} that for such a domain $G$ the hyperbolic geodesic joining any two points on the positive real axis is the line segment joining those two points.

In order to prove Example~\ref{extreme-ex} we need two further results. The first is due to Sodin \cite[Lemma~4]{Sodin}, who sharpened an earlier result of this type due to Benedicks~\cite{Ben}. In this result, cap\,$(.)$ denotes logarithmic capacity.
\begin{lemma}\label{sodin}
Let $Q_z(h)$ be the open square in the complex plane with center $z$ and sidelength $h>0$, and let $Q(h)=Q_0(h)$. Let $E\subset Q(r) \cap \R$ be a closed set such that, for some $\delta\in (0,1)$ and $h\in (0,r)$,
\[
{\rm cap}\left( \frac{1}{2h}(E\cap Q_x(h))\right)\ge \delta, \qfor |x| < r-h.
\]
Then
\[
\omega(0) \le \frac{Ch}{r}\log(1/\delta),
\]
where $\omega$ denotes the harmonic measure in $Q(r)\setminus E$ of $\partial Q(r)$, and $C>0$ is an absolute constant.
\end{lemma}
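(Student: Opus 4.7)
\textbf{Proof proposal for Lemma~\ref{sodin}.}

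The plan is to bound the harmonic measure $\omega(z)=\omega(z,\partial Q(r),Q(r)\setminus E)$ at $0$ by constructing a nonnegative superharmonic majorant of $\omega$ on $Q(r)\setminus E$, estimating it at the origin, and applying the maximum principle.

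The local input is the classical inequality between logarithmic capacity and harmonic measure: if $K$ is a compact subset of the unit disc~$\D$ with $\mathrm{cap}(K)\ge \delta$, then $\omega(0,\partial \D,\D\setminus K)\le C_1/\log(1/\delta)$ for an absolute $C_1>0$. Rescaling by $1/(2h)$ and using the scale invariance of harmonic measure, the hypothesis gives the one-sub-square estimate
\[
\omega\bigl(x,\partial Q_x(h),Q_x(h)\setminus E\bigr)\le \frac{C_1}{\log(1/\delta)},\qfora |x|<r-h.
\]
Equivalently, writing $\mu_x$ for the unit-mass equilibrium measure of $E\cap Q_x(h)$ and $V_x$ for its Robin constant, the function $V_x-U^{\mu_x}$ is nonnegative and superharmonic, vanishes quasi-everywhere on the support of $\mu_x$, and at the centre $x$ satisfies $(V_x-U^{\mu_x})(x)=O(1)$ while $V_x$ itself is of order $\log(1/\delta)$.

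I would then aggregate these local estimates into a global bound by trying a comparison function of the form
\[
v(z)= A\!\int_{-r+h}^{r-h}\!\bigl(V_x-U^{\mu_x}(z)\bigr)\,dx + B,
\]
with positive constants $A,B$ chosen so that $v\ge 1$ on $\partial Q(r)$ and $v\ge 0$ on $E$; since the integrand is nonnegative and superharmonic in $z$, so is $v$, and the maximum principle applied to $v-\omega$ in $Q(r)\setminus E$ gives $\omega(z)\le v(z)$. Evaluating at $z=0$, the key point is that only sub-squares $Q_x(h)$ with $|x|$ comparable to $h$ actually contribute a non-negligible value to $U^{\mu_x}(0)-V_x$; integrating over $x\in(-r+h,r-h)$ and balancing against the boundary condition $v\ge 1$ on $\partial Q(r)$, which forces $A$ to be of order $1/r$, should produce the announced $v(0)\le Ch\log(1/\delta)/r$.

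The main obstacle is the simultaneous calibration of $A$ and $B$: $A$ must be large enough to force $v\ge 1$ on $\partial Q(r)$ (where each $V_x-U^{\mu_x}$ can be as small as $O(\log(r/h))$, which is much less than $V_x$), yet small enough that $v(0)$ retains its linear dependence on $h/r$. A naive iteration that chains the local decays across $\sim r/h$ disjoint layers of sub-squares would give the stronger exponential bound $\exp(-c(r/h)/\log(1/\delta))$, but only under independence assumptions that fail when the sub-squares $Q_x(h)$ are allowed to overlap continuously in $x$. Sodin's refinement of Benedicks~\cite{Ben} avoids this difficulty precisely by using the integrated (rather than iterated) construction above; verifying the boundary inequalities for the integrated potential and controlling $U^{\mu_x}(0)$ uniformly in $x$ is the technical heart of the argument.
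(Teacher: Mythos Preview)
The paper does not prove this lemma; it is quoted in Section~\ref{prelim} as a preliminary result from \cite[Lemma~4]{Sodin}, with no argument supplied, so there is no in-paper proof to compare your sketch against.

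As for the proposal on its own terms: it is a plausible outline in the spirit of the Benedicks--Sodin method, but it is a plan and not a proof. You explicitly identify the gap yourself: the constants $A,B$ in the comparison function $v$ must be chosen so that $v\ge 1$ on $\partial Q(r)$ while $v(0)\le Ch\log(1/\delta)/r$, and you have not carried out that calibration. Your own observation that on $\partial Q(r)$ each term $V_x-U^{\mu_x}$ may be only of order $\log(r/h)$, which is much smaller than $V_x\sim\log(1/\delta)$, is precisely what makes the two requirements pull in opposite directions; with $A$ of order $1/r$ the boundary integral is then of order $\log(r/h)$, not obviously $\ge 1$. Likewise, the heuristic that ``only sub-squares with $|x|$ comparable to $h$ contribute at $0$'' needs a quantitative version before it yields the claimed $h/r$ factor. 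Until those boundary and centre estimates are actually computed and shown to be compatible, the conclusion $\omega(0)\le Ch\log(1/\delta)/r$ has not been established.
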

Finally, we need a special case of a beautiful theorem of  Kjellberg~\cite[Theorem~6.7]{wH89}, which generalised and sharpened earlier results of Wiman~\cite{aW05} and Heins~\cite{mH48}.

\begin{lemma}\label{Wiman}
Let~$u$ be a non-constant subharmonic function in $\C$ of order~1/2. Then either
\[
\limsup_{r\to\infty}A(r,u)=\infty,
\]
or
\[
\lim_{r\to\infty}B(r,u)/r^{1/2}=\alpha\;\text{ as } r \to\infty, \quad\text{where } 0<\alpha<\infty. 
\]
\end{lemma}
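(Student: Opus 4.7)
The plan is to prove this classical Kjellberg--Wiman--Heins theorem by combining Beurling's Lemma~\ref{Beur} (to obtain a lower bound $B(r,u)\ge cr^{1/2}$) with a Baernstein $*$-function / Phragm\'en--Lindel\"of argument (to obtain the matching upper bound $B(r,u)\le Cr^{1/2}$), and then upgrading the resulting two-sided bound to a single limit via a compactness--rigidity argument.

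First I would suppose we are not in the first alternative, so $M:=\limsup_{r\to\infty}A(r,u)<\infty$. For fixed $\varepsilon>0$, the shifted subharmonic function $\wt u:=u-M-\varepsilon$ satisfies $A(r,\wt u)\le -\varepsilon<0$ for all $r\ge R_\varepsilon$, so in the notation of Lemma~\ref{Beur} we have $E(r_1,r_2)=[r_1,r_2]$ whenever $r_1\ge R_\varepsilon$. Since $u$ is non-constant subharmonic, $B(r_1,u)\to\infty$, so we may choose $r_1$ large enough that $B(r_1,\wt u)>0$. The second part of Lemma~\ref{Beur} then gives
\[
B(r,u) > M+\varepsilon+\tfrac{1}{2}(r/r_1)^{1/2}(B(r_1,u)-M-\varepsilon),\qfor r\ge r_1,
\]
so $\liminf_{r\to\infty}B(r,u)/r^{1/2}>0$. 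For the matching upper bound I would invoke Baernstein's $*$-function technique: the symmetric rearrangement $u^*(re^{i\theta})$ is subharmonic in the upper half-plane and satisfies $u^*(-r)\le 2\pi M$ (from $A(r,u)\le M$); since $u$ has order $1/2$, a Phragm\'en--Lindel\"of estimate in the half-plane yields $u^*(re^{i\theta})\le Cr^{1/2}\sin(\theta/2)$, and hence $B(r,u)\le C'r^{1/2}$.

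To pass from the two-sided bound $0<c\le B(r,u)/r^{1/2}\le C<\infty$ to the existence of a single limit $\alpha\in(0,\infty)$, I would use a compactness--rigidity argument. The dilates $v_R(z):=u(Rz)/R^{1/2}$ form a precompact family of subharmonic functions (in $L^1_{\mathrm{loc}}$), and any subsequential limit $v_\infty$ as $R\to\infty$ is a non-negative subharmonic function of order at most $1/2$ with $A(r,v_\infty)\le 0$ for all $r>0$. The Heins--Benedicks uniqueness result (in the spirit of the Benedicks theorem cited in Remark~1 after the definition of $\K$) identifies any such $v_\infty$ as $\alpha_0\,\Real(z^{1/2})$ with branch cut on $(-\infty,0]$, extended by $0$, where $\alpha_0$ is determined by $B(1,v_\infty)$; since this forces all subsequential limits to coincide (up to a common scalar pinned down by the original $\limsup$), one concludes that $B(r,u)/r^{1/2}$ converges to a single $\alpha>0$. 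The main obstacle is precisely this rigidity step: the earlier bounds only give $0<\liminf\le\limsup<\infty$, and identifying $\liminf=\limsup$ reflects the extremal role of $\Real(z^{1/2})$ at the threshold order $1/2$ and is the substantive analytic content of the theorem.
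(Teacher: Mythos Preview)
The paper does not prove Lemma~\ref{Wiman}; it is simply quoted as a known result, a special case of Kjellberg's theorem \cite[Theorem~6.7]{wH89}. So there is no proof in the paper to compare your argument against.

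Regarding your outline itself: Step~2 (the lower bound $\liminf_{r\to\infty}B(r,u)/r^{1/2}>0$ via Lemma~\ref{Beur}) is correct. Step~3 as written is problematic: with the standard definition of the Baernstein $*$-function, $u^*(-r)$ equals $2\pi$ times the circular mean of $u$ over $|z|=r$, which tends to infinity here, so the hypothesis $A(r,u)\le M$ does not yield $u^*(-r)\le 2\pi M$ and the Phragm\'en--Lindel\"of input you claim is not available in that form. More seriously, you yourself flag Step~4 as the ``substantive analytic content'' that your sketch does not supply: the compactness argument shows that every subsequential limit of $u(Rz)/R^{1/2}$ is a nonnegative multiple of $\Real(\sqrt z)$ extended by~$0$, but nothing in your outline excludes different multipliers along different subsequences, and the Benedicks-type uniqueness result you invoke determines such functions only up to a positive scalar. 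Kjellberg's actual proof establishes the existence of the limit by quite different means, via a direct analysis of the Riesz measure of~$u$ and an associated Tauberian argument; that is the part genuinely missing from your proposal.
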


\section{Proof of Theorem~\ref{basic-props}}\label{lower-estimate-proof}
\setcounter{equation}{0}
The proof of Theorem~\ref{basic-props} is reasonably straightforward. The first statement in part~(a) follows easily from the fact that any subharmonic function $u\in\K$ can be represented as a potential of the form
\[
u(z)=u(0)+\int_0^{\infty} \log|1+z/t|\,d\mu(t),
\]
where $\mu(t)$ denotes the Riesz measure of~$u$ in the disc $\{z:|z| \le t\}$, which is clearly entirely supported in the set~$E$; see \cite[Theorem~2.1]{mH48}, for example.

To prove the second statement in part~(a) we use the fact that the function $U(z)=u(z^2)$ is positive harmonic in the right half-plane with continuous boundary values, so it has the Poisson integral representation
\begin{equation}\label{Poisson-half}
U(z)=cx+\frac{x}{\pi}\int_{-\infty}^{\infty}\frac{U(it)}{|z-it|^2}\,dt,\qfor x=\Re (z)>0,
\end{equation}
where $c\ge 0$; see, for example, \cite[Theorem 7.26]{AB}. It follows that $U(x)/x$ is decreasing for $x\in (0,\infty)$ and hence that $u(r)/r^{1/2}$ is decreasing for $r>0$.

To prove part~(b) we use Lemma~\ref{Beur}. Since $B(r,u)=u(r)$ for $r>0$, by Theorem~\ref{basic-props}, part~(a), this lemma gives
\[
u(r)> \frac 12 \exp \left(\frac 12 \int_{E^*\cap(1,r)} \frac 1t\,dt\right)u(1),\qfor r>1,
\]
and hence
\[
\frac{\log u(r)}{\log r}> \frac{1}{2\log r}\int_{E^*\cap(1,r)} \frac 1t\,dt+\frac{\log u(1)-\log 2}{\log r},\qfor r>1,
\]
from which both statements in part~(b) follow immediately.

{\it Remark}\;\; The above proof can be adapted easily to show that if the set~$E$ is a sufficiently `thick' subset of the negative real axis, then
\begin{equation}\label{mean-type}
u(r)\ge cr^{1/2}, \qfor r>1,
\end{equation}
where~$c$ is a positive constant. For example, suppose that the sequences $(a_n)$ and $(b_n)$ satisfy $a_0=1$ and the recurrence relations
\[
b_n=2a_n\quad \text{and} \quad a_{n+1}=b_n+b^p_n,\qfor n \ge 0,
\]
where $0<p<1$, and $u\in\K$ corresponds to the set $E=\bigcup_{n=0}^{\infty} [-b_n,-a_n]$. Then Lemma~\ref{Beur} gives, for $n\ge 1$,
\begin{align*}
u(b_n)&> \frac 12 \exp\left(\frac 12 \int_{E^*\cap(b_0,b_n)} \frac 1t\,dt\right)u(b_0)\\
&=\frac12 \prod_{j=1}^n\left(\frac{b_j}{a_j}\right)^{1/2} u(b_0)\\
&=\frac12 \prod_{j=1}^n\left(\frac{b_j}{b_{j-1}+b^p_{j-1}}\right)^{1/2} u(b_0)\\
&=\frac12 \left(\frac{b_n}{b_0}\right)^{1/2}\prod_{j=1}^n\left(\frac{1}{1+b^{p-1}_{j-1}}\right)^{1/2} u(b_0),
\end{align*}
from which \eqref{mean-type} follows, since $b_n \ge 2^n$ for $n\ge 0$ and $u(r)/r^{1/2}$ is decreasing for $r>0$.

\section{Proof of Theorem~\ref{exactorder}}\label{upper-estimate-proof}
\setcounter{equation}{0}

In this section we give the proof of Theorem~\ref{exactorder}. First, however, we give a basic technical lemma that will be needed in the proof. Here we use the notation $\log^+ x=\max\{\log x,0\}$ for $x>0$.

\begin{lemma}\label{tech}
Let $x_n>0$ for $n=1,2,\ldots$, and suppose that
\[
\frac1n \sum_{j=1}^n x_j \to \infty \;\text{ as } n\to \infty.
\]
Then
\[
\sum_{j=1}^n \log^+ x_j = o\left(\sum_{j=1}^n x_j\right) \;\text{ as } n\to \infty.
\]
\end{lemma}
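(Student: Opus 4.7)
The plan is to exploit the sublinear growth of $\log^+$: for large $t$, $\log^+ t$ is negligible compared to $t$, whereas for $t$ bounded, $\log^+ t$ is bounded by a constant. The hypothesis that the Ces\`aro mean $\frac{1}{n}\sum_{j=1}^{n} x_j$ tends to infinity will be used at the end to absorb the contribution from the ``small'' indices.

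Concretely, writing $S_n = \sum_{j=1}^{n} x_j$ and fixing $\varepsilon>0$, I would first pick $K>1$ large enough that $\log t \le \varepsilon t$ for all $t\ge K$; such $K$ exists because $\log t/t \to 0$ as $t\to \infty$. Splitting the sum at this threshold,
\[
\sum_{j=1}^{n}\log^+ x_j \;=\; \sum_{\substack{1\le j\le n\\ x_j\ge K}}\log^+ x_j \;+\; \sum_{\substack{1\le j\le n\\ x_j< K}}\log^+ x_j,
\]
the first sum is at most $\varepsilon \sum_{j: x_j\ge K} x_j \le \varepsilon S_n$ by the choice of $K$, and the second sum is at most $n\log K$, because each term is at most $\log^+ K=\log K$ and there are at most $n$ such indices.

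To finish, the hypothesis $S_n/n \to \infty$ provides $n_0$ such that $n\log K \le \varepsilon S_n$ for all $n\ge n_0$. Combining the two bounds yields $\sum_{j=1}^{n}\log^+ x_j \le 2\varepsilon S_n$ for $n\ge n_0$, and since $\varepsilon>0$ is arbitrary, this gives the required $o(S_n)$ estimate. The argument is essentially routine; if there is any subtle point it is the order of the quantifiers, namely that the threshold $K$ must be selected first (depending only on $\varepsilon$) so that the Ces\`aro hypothesis can then be applied to a single constant $\log K$.
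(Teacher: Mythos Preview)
Your proof is correct and follows essentially the same approach as the paper: split the sum at a large threshold, bound the ``large'' terms using $\log t \le \varepsilon t$ (the paper phrases this as $(\log t)/t \le (\log X)/X$ for $t\ge X\ge e$) and the ``small'' terms by $n\log K$, then invoke the Ces\`aro hypothesis to absorb the latter. The order-of-quantifiers point you flag is exactly the one the paper handles as well.
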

\begin{proof}
For $x\ge X\ge e$, we have $(\log x)/x \le (\log X)/X$. Therefore, for $\eps>0$ and $X\ge e$, we have
\begin{align*}
\sum_{j=1}^n \log^+ x_j &= \sum_{x_j\le X} \log^+ x_j+ \sum_{x_j>X} \log x_j\\
&\le n\log X+ \frac{\log X}{X}\sum_{j=1}^n x_j.
\end{align*}
Hence, by further taking~$X$ so large that $(\log X)/X \le \frac12 \eps$ and then $n$ so large that $n\log X \le \frac12 \eps \sum_{j=1}^n x_j$, we obtain
\[
\sum_{j=1}^n \log^+ x_j\le \eps \sum_{j=1}^n x_j,
\]
as required.
\end{proof}
In order to prove Theorem~\ref{exactorder}, given the results of Theorem~\ref{basic-props}, part~(b), we need only show that, under the given hypotheses,
\begin{equation}\label{upper-estimates}
\rho(u)\le \tfrac12\overline{\Lambda}(E^*)\quad \text{and} \quad \lambda(u)\le \tfrac12 \underline{\Lambda}(E^*).
\end{equation}
By Theorem~\ref{basic-props}, part~(a), we have
\[
\rho(u)=\limsup_{r\to\infty}\frac{\log u(r)}{\log r}\quad \text{and} \quad \lambda(u)=\liminf_{r\to\infty}\frac{\log u(r)}{\log r}.
\]
Therefore, by Lemma~\ref{BearSmith} and the definition of the Harnack metric in~\eqref{Har-metric}, it is sufficient to show that
\begin{equation}\label{order-hyp-estimates}
\limsup_{r\to\infty}\frac{\rho_D(1,r)}{\log r} \le \tfrac12\overline{\Lambda}(E^*) \quad \text{and} \quad \liminf_{r\to\infty}\frac{\rho_D(1,r)}{\log r} \le \tfrac12\underline{\Lambda}(E^*),
\end{equation}
where $D=\C\setminus E^*$ as usual.

In view of the remark after Lemma~\ref{Weit}, we have
\begin{equation}\label{segment}
\rho_D(1,r)= \int_1^r d\rho_D(t)\,dt.
\end{equation}
Therefore, to prove \eqref{order-hyp-estimates} we need to obtain good upper estimates for the hyperbolic density $d\rho_D(t)$, for $t>1$.

First, we have the basic hyperbolic density estimate
\begin{equation}\label{hyp-est1}
d\rho_D(t) \le d\rho_{\C\setminus (-\infty,0]}(t) = \frac{1}{2t}, \qfor t>0,
\end{equation}
which follows from the standard monotonicity property of the hyperbolic metric, using the fact that $D\subset \C\setminus (-\infty,0]$, together with an evaluation of $d\rho_{\C\setminus (-\infty,0]}(t)$, for $t>0$, by conformal mapping from the right half-plane to the cut plane.

Second, we have the Beardon--Pommerenke estimate in Lemma~\ref{BeardPomm}. To apply this it is convenient to assume, as we may by the monotonicity property of the hyperbolic metric, that $a_0=0$ and $b_0\ge 1$; that is, the first interval of $E$ contains $[-1,0]$. With this assumption on~$E$ the closest point of $\partial D$ to any positive number~$t$ is 0, so we can apply Lemma~\ref{BeardPomm} to obtain the estimate
\begin{equation}\label{hyp-est2}
d\rho_D(t) \le \frac{\pi/2}{t\beta_D(t)}, \qfor t>0,
\end{equation}
where
\[
\beta_D(t)=\inf\{\left|\log (t/|b|)\right|:b\in \partial D\}, \qfor t>0.
\]
The Beardon-Pommerenke estimate is more effective than \eqref{hyp-est1} when~$t$ lies well inside an interval of the form $[b_n,a_{n+1}]$. Indeed, putting $s_n=\sqrt{b_na_{n+1}}$, $n\ge 0$, we deduce from \eqref{hyp-est2} that, for $n\ge 0$,
\begin{equation}\label{hyp-est3}
\beta_D(t) =
\begin{cases}
\log t/b_n, & b_n <t\le s_n,\\
\log a_{n+1}/t, & s_n\le t<a_{n+1}.
\end{cases}
\end{equation}
To take advantage of this better estimate, we shall apply \eqref{hyp-est1} for values of~$t$ lying in intervals of the form
\[
[a'_n,b'_n], \quad \text{where } a'_n=\tfrac12 a_n \; \text{and} \; b'_n=2b_n, \qfor n\ge 0,
\]
and the estimate \eqref{hyp-est2} in the complementary intervals.

Together with \eqref{segment}, this gives
\begin{equation}\label{up-bound}
\rho_D(1,r) \leq \frac12\int_{E'\cap (1,r)}\frac{dt}{t} + \frac{\pi}{2}\int_{(1,r) \setminus E'} \frac{dt}{t\beta_D(t)},
\end{equation}
where
\[
E' = \bigcup_{n=0}^{\infty}[a'_n,b'_n].
\]
Observe that the complementary intervals of $E'$ are of the form $(b'_n,a'_{n+1})$ in the cases where $b'_n<a'_{n+1}$. Also, for such~$n$, we have $s_n\in (b'_n,a'_{n+1})$.

We now consider each of the two integrals in \eqref{up-bound} separately.

{\bf Claim 1}
\[
\frac12\int_{E'\cap (1,r)}\frac{dt}{t} \leq \tfrac12\overline{\Lambda}(E^*)\log r \,(1 + o(1))\; \text{ as } r \to \infty.
\]
\begin{proof} We note that, for $a'_n \leq r \leq a'_{n+1}$, where $n \geq 0$, we have
\[
\frac12\int_{E'\cap (1,r)}\frac{dt}{t} \leq \frac12\int_{E^*\cap (1,r)}\frac{dt}{t} + n \log 2.
\]
Claim~1 now follows from the fact that
\[
\overline{\Lambda}(E^*)=\limsup_{r \to\infty}\frac{1}{\log r}\int_{E^*\cap(1,r)}\frac{dt}{t},
\]
together with the fact that $n = o(\log r)$ as $r \to \infty$, since $r \geq a_n/2$ and $a_n^{1/n} \to \infty$ as $n \to \infty$, by \eqref{an-cond}.
\end{proof}

Obtaining an upper bound for the second integral requires more work.

{\bf Claim 2}
\[
\frac{\pi}{2}\int_{(1,r) \setminus E'} \frac{dt}{t\beta_D(t)}= o(\log r)\; \text{ as } r \to \infty.
\]
\begin{proof} We consider the case that $a'_n \leq r \leq a'_{n+1}$, where $n \geq 1$. It follows from~\eqref{hyp-est2} and~\eqref{hyp-est3} that
\begin{align*}
\frac{\pi}{2}\int_{(1,a'_n) \setminus E'} \frac{dt}{t\beta_D(t)}
& =\frac{\pi}{2} \sum_{\substack{j=0\\ b'_j<a'_{j+1}}}^{n-1}\left( \int_{b'_j}^{s_j} \frac{dt}{t \log t/b_j} + \int_{s_j}^{a'_{j+1}} \frac{dt}{t \log a_{j+1}/t}  \right) \\
& =  \frac{\pi}{2} \sum_{\substack{j=0\\ b'_j<a'_{j+1}}}^{n-1}\left( \log \frac{\log s_j/b_j}{\log 2} + \log \frac{\log a_{j+1}/s_j}{\log 2} \right)\\
& =  \pi \sum_{\substack{j=0\\ b'_j<a'_{j+1}}}^{n-1} \log \frac{\log a_{j+1}/s_j}{\log 2}\\
& <  \pi \sum_{j=0}^{n-1} \log^+ \log a_{j+1}/a_j,
\end{align*}
since $s_j/b_j=a_{j+1}/s_j=a^{1/2}_{j+1}/b^{1/2}_j\le a^{1/2}_{j+1}/a^{1/2}_j$.

In view of condition \eqref{an-cond}, we can now apply Lemma~\ref{tech} with $x_j = \log a_{j+1}/a_j$ to give
\[
\frac{\pi}{2}\int_{(1,a'_n) \setminus E'} \frac{dt}{t\beta_D(t)} = o\left( \sum_{j=1}^{n-1}\log a_{j+1}/a_j \right) = o(\log a_n) = o(\log r)\; \text{ as } r \to \infty.
\]
This proves Claim 2 in the case that $a'_n \leq r \leq b'_n$.

It remains to show that, if $b'_n \leq r \leq a'_{n+1}$, then
\begin{equation}\label{end}
\frac{\pi}{2}\int_{b'_n}^r \frac{dt}{t \beta_D(t)} = o(\log r)\; \text{ as } r \to \infty.
\end{equation}

We split this into two cases. First, if $b'_n \leq r \leq s_n$, then it follows from~\eqref{hyp-est3} that
\begin{equation}\label{case1}
\frac{\pi}{2}\int_{b'_n}^r \frac{dt}{t \beta_D(t)} = \frac{\pi}{2}\int_{b'_n}^r \frac{dt}{t \log t/b_n} = \frac{\pi}{2} \log \frac{\log r/b_n}{\log 2}= o(\log r)\; \text{ as } r \to \infty.
\end{equation}
Second, if $s_n < r \leq a'_{n+1}$, then it follows from~\eqref{hyp-est3} together with~\eqref{case1} that
\begin{align*}
\frac{\pi}{2}\int_{b'_n}^r \frac{dt}{t \beta_D(t)}
& \leq  \frac{\pi}{2}\int_{b'_n}^{s_n} \frac{dt}{t \log t/b_n} +  \frac{\pi}{2}\int_{s_n}^{a'_{n+1}} \frac{dt}{t \log a_{n+1}/t} \\
& = \frac{\pi}{2} \log \frac{\log s_n/b_n}{\log 2} + \frac{\pi}{2} \log \frac{\log a_{n+1}/s_n}{\log 2}\\
& = \pi \log \frac{\log s_n/b_n}{\log 2} < \pi \log \frac{\log r}{\log 2} = o(\log r)\; \text{ as } r \to \infty.
\end{align*}
Together with~\eqref{case1}, this shows that~\eqref{end} is true. This completes the proof of Claim 2.
\end{proof}

It follows from Claim 1 and Claim 2 together with~\eqref{up-bound} that
\[
\rho_D(1,r) \leq \tfrac12\overline{\Lambda}(E^*)\log r (1 + o(1))\; \text{ as } r \to \infty.
\]
The first estimate in~\eqref{order-hyp-estimates} now follows.

A similar but somewhat simpler argument can be used to prove the second statement in~\eqref{order-hyp-estimates}. First recall that
\[
\underline{\Lambda}(E^*)=\liminf_{r \to\infty}\frac{1}{\log r}\int_{E^*\cap(1,r)}\frac{dt}{t}.
\]
It is easy to check that, for $n\ge 0$,
\[
\min_{b_n\le r\le b_{n+1}}\frac{1}{\log r}\int_{E^*\cap(1,r)}\frac{dt}{t}
\]
occurs at $r=a_{n+1},$ so there must be a subsequence $a_{n_k}, k=1,2,\ldots,$ such that
\[
\underline{\Lambda}(E^*)=\lim_{k \to\infty}\frac{1}{\log a_{n_k}}\int_{E^*\cap (1,a_{n_k})}\frac{dt}{t}.
\]
Hence, by similar reasoning to that used to prove Claim~1 and by Claim~2 (in the special case when $r=a_{n_k}, k=1,2,\ldots$), we deduce that
\begin{align*}
\rho(1,a_{n_k}) &\le \frac12\int_{E^*\cap (1,a_{n_k})}\frac{dt}{t}  + n_k\log 2+ \frac{\pi}{2}\int_{(1,a_{n_k}) \setminus E'} \frac{dt}{t\beta_D(t)}\\
&\le \tfrac12\underline{\Lambda}(E^*)\log a_{n_k}(1+o(1)) + o\left( \log a_{n_k} \right)\\
&\le \tfrac12\underline{\Lambda}(E^*)\log a_{n_k} (1 +o(1)) \;\text{ as } k \to \infty.
\end{align*}
The second estimate in~\eqref{order-hyp-estimates} now follows.

This completes the proof of Theorem~\ref{exactorder}.

\section{Proof of Example~\ref{extreme-ex}}\label{example-proof}
\setcounter{equation}{0}
Our example is a function $u\in\K$ where
\[
E=\bigcup_{n\ge 1}[-b_n,-a_n], \quad\text{with } a_n=n, b_n=n+1/n,\; n=1,2, \ldots,
\]
normalised so that $u(1)=1$. It is straightforward to check that $\overline{\Lambda}(E^*)=0$ in this case.

We shall show that, for this function,
\begin{equation}\label{o(1)}
u(-r)=o(1)\;\text{ as } r\to \infty.
\end{equation}
It follows, by Barry's theorem (see the end of Section~\ref{intro}), that~$u$ must have order~$1/2$ and then, by Lemma~\ref{Wiman}, that~$u$ cannot be of order~1/2, minimal type. Hence $\lim_{r\to\infty}u(r)/r^{1/2}>0$, by Theorem~\ref{basic-props}, part~(a); in particular,~$u$ has order and lower order~$1/2$.

To prove \eqref{o(1)}, we first apply Lemma~\ref{sodin} to the part of $E$ that lies in the square box of the form
\[
Q_{z_r}(\tfrac12 r)=\{z:-2r <\Re z<-r:|\Im z|<\tfrac12 r\},\quad\text{where }  z_r=-\tfrac32 r, r>1.
\]
From now on we assume that $r>4$. Then
\[
\max\{n\in\N: -b_n\in Q_{z_r}(\tfrac12 r)\}\le 2r,
\]
so, for $r<a_n<b_n<2r$,
\[
b_n-a_n = \frac1n \ge \frac{1}{2r}.
\]
We shall apply Lemma~\ref{sodin} with $h=1$. In this case, for $|x-z_r| < \tfrac12r-h$,
\[
E\cap Q_x(h)\text{ contains at least one interval of } E,
\]
so
\[
{\rm cap}\left( \frac{1}{2h}(E\cap Q_x(h))\right)\ge \frac14\cdot \frac12\cdot \frac{1}{2r}=\frac{1}{16r}, \qfor |x| < \tfrac12 r-h,
\]
since cap\,$(I) \ge \frac14 |I|$ for any interval $I$ on the real line; see~\cite[Corollary~9.10]{Pomm}, for example. Therefore, by Lemma~\ref{sodin}, with $h=1$ and $\delta=1/(16r)$, we have
\begin{equation}\label{omega}
\omega(z_r) \le \frac{C}{r}\log(16r), \qfor r>4,
\end{equation}
where $\omega$ denotes the harmonic measure in $Q_{z_r}(\tfrac12 r)\setminus E$ of $\partial Q_{z_r}(\frac12r)$ and $C>0$ is an absolute constant.

Now note that, by Theorem~\ref{basic-props}, part~(a),
\[
\max\{u(z): z\in \partial Q_{z_r}(\tfrac12r)\}\le u(3r) \le (3r)^{1/2},\qfor r>0.
\]
It follows, by applying the maximum principle to~$u$ in $Q_{z_r}(\tfrac12 r)\setminus E$ and using \eqref{omega} that, for $r>4$,
\begin{align*}
u(z_r)&\le \max\{u(z): z\in \partial Q_{z_r}(\tfrac12r)\} \omega(z_r)\\
&\le \frac{C(3r)^{1/2}}{r}\log(16r)\\
&=o(1) \;\text{ as } r \to \infty,
\end{align*}
as required.

\section{Proof of Theorem~\ref{min-type}}\label{min-type-result}
\setcounter{equation}{0}
For the proof of Theorem~\ref{min-type}, we need the following result on positive harmonic functions defined in annuli; see \cite[Theorem~3.1]{BRS11} for a variation on Lemma~\ref{Har}.

\begin{lemma}\label{Har}
Let~$u$ be positive and harmonic in $\{z:r_1<|z|<r_2\}$ and suppose that $r_1<s_1\le s_2<r_2$. Then there is a positive constant $K$ depending only on $\mu:=\min\{\log(s_1/r_1), \log(r_2/s_2)\}$ such that
\begin{equation}\label{annulus-est}
u(z')\le Ku(z), \qfor |z'|=|z|\in [s_1,s_2].
\overline{}\end{equation}
\end{lemma}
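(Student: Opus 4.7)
The plan is to reduce the statement to the classical Harnack inequality via a logarithmic change of coordinates. I would first set $\tilde u(w) = u(e^w)$ for $w$ in the horizontal strip $S = \{w \in \C : \log r_1 < \Re w < \log r_2\}$. Since $w \mapsto e^w$ is a holomorphic covering of the annulus $\{r_1 < |z| < r_2\}$ by $S$, the function $\tilde u$ is positive and harmonic in $S$, and $2\pi i$-periodic in $\Im w$. Two points $z, z'$ with $|z| = |z'| = s \in [s_1, s_2]$ lift to points $w = \log s + i\theta$ and $w' = \log s + i\theta'$ on the line $\ell_s = \{w : \Re w = \log s\}$, and by $2\pi i$-periodicity these lifts may be chosen with $|w - w'| \le \pi$. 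Moreover $\ell_s$ lies at distance at least $\mu$ from $\partial S$, by the definition of $\mu$.

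Next I would carry out a Harnack chain argument along $\ell_s$. For any $w_0 \in \ell_s$ the open disk $D(w_0, \mu)$ is contained in $S$ by the previous distance bound, so the classical Harnack inequality on this disk gives $\tilde u(w_1) \le 3\, \tilde u(w_0)$ whenever $|w_1 - w_0| \le \mu/2$ (the factor $3 = (\mu + \mu/2)/(\mu - \mu/2)$). Chaining along $\ell_s$ through intermediate points at consecutive separation at most $\mu/2$, one passes from $w$ to $w'$ in at most $N := \lceil 2\pi/\mu \rceil$ steps, each contributing a factor of~$3$. This yields $\tilde u(w') \le 3^N \tilde u(w)$, which translates back via $z = e^w$ to $u(z') \le K\, u(z)$ with $K = K(\mu) = 3^{\lceil 2\pi/\mu \rceil}$, a constant depending only on~$\mu$, as required.

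This argument is entirely routine given the classical Harnack inequality on a disk; there is no substantial obstacle. The only point requiring a moment of care is that the chain length must be controlled purely in terms of $\mu$, and this is automatic from the $2\pi i$-periodicity of $\tilde u$, which bounds the relevant separation on $\ell_s$ by $\pi$ independently of $s \in [s_1, s_2]$. An essentially equivalent alternative, which one could present instead, is to verify directly in the annulus that the circle $\{|w| = s\}$ can be covered by a bounded number (depending only on $\mu$) of disks of radius comparable to $s$ that lie in $\{r_1 < |w| < r_2\}$ and apply Harnack on each, but the logarithmic coordinate makes the dependence on $\mu$ most transparent.
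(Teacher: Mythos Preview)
Your proof is correct and follows essentially the same approach as the paper: both pull back to the strip via $w\mapsto e^w$ and then apply Harnack's inequality on discs of radius $\mu$ along a vertical segment of length at most $2\pi$. The paper's version is terser, simply invoking Harnack's inequality on such discs without writing out the chain explicitly, whereas you give the chain argument and an explicit constant $K=3^{\lceil 2\pi/\mu\rceil}$.
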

\begin{proof}
The positive harmonic function $u(e^t)$ is defined in the infinite strip  $S=\{t:\log r_1<\Re (t)<\log r_2\}$. We can apply Harnack's inequality to this function in any disc of radius~$\mu$ whose centre lies in the rectangle
\[
R=\{t:\log s_1\le \Re (t)\le \log s_2,-\pi\le \Im (t)\le \pi\}\subset S,
\]
to deduce that there is a positive constant $K=K(\mu)$ such that
\[
u(e^{t'})\le Ku(e^t),\qfor t,t'\in R,\; \Re (t')=\Re (t)\in [\log s_1, \log s_2],
\]
and this gives \eqref{annulus-est}.
\end{proof}

Theorem~\ref{min-type} states that if the function $u\in \K$, with corresponding closed subset $E$ of the negative real axis, satisfies
\[E^c\supset\bigcup_{n\ge 0}(-d_n,-c_n),\]
where $0\le c_0<d_0<c_1<d_1< \cdots,$ and $\limsup_{n\to\infty} d_{n}/c_n>1$, then
\[
\frac{u(r)}{r^{1/2}}\to 0\;\;\text{as }r\to\infty.
\]
We give two proofs, the first based on Lemma~\ref{Wiman} and the other a direct one using the Poisson integral formula.

\begin{proof}[First proof of Theorem~\ref{min-type}]
By the hypotheses on $(c_n)$ and $(d_n)$, we can assume that there exists $d>1$ such that
\begin{equation}\label{dncn}
d_n/c_n \ge d, \qfor n\ge 0.
\end{equation}
We will apply Lemma~\ref{Har} with
\[
r_1=c_n,\quad s_1=c_n^{3/4}d_n^{1/4},\quad s_2=c_n^{1/4}d_n^{3/4},\quad r_2= d_n.
\]
Then, by~\eqref{dncn},
\[
\frac{s_1}{r_1}=\frac{r_2}{s_2}=\left(\frac{d_n}{c_n}\right)^{1/4}\ge d^{\,1/4},\qfor n\ge 0.
\]
We deduce that there is a constant $K=K(d)>0$ such that
\begin{equation}\label{u(s)-est}
u(r) \le Ku(-r),\qfor c_n^{3/4}d_n^{1/4}\le r \le c_n^{1/4}d_n^{3/4},\;n\ge 0,
\end{equation}
and, in particular, $A(r)=u(-r), r>0,$ is unbounded, by Theorem~\ref{basic-props}, part~(a).

The fact that $u(r)=B(r) = o(r^{1/2})$ as $r\to\infty$ now follows immediately from Lemma~\ref{Wiman} and Theorem~\ref{basic-props}, part~(a), in the case that $\rho(u)=1/2$ and is trivial if $\rho(u)<1/2$.
\end{proof}

\begin{proof}[Second proof of Theorem~\ref{min-type}]
The alternative direct argument uses the estimate
\begin{equation}\label{u(r)-est}
u(r) \ge \frac{r^{1/2}}{\pi}\int_0^{\infty} \frac{u(-s)}{s^{1/2} (s+r)}\,ds \ge \sum_{n= 0}^{\infty} \frac{r^{1/2}}{\pi}\int_{c_{n}}^{d_n}\frac{u(-s)}{s^{1/2} (s+r)}\,ds,
\end{equation}
which follows from the convergence of the integral in \eqref{Poisson-half} after the change of variable $z\mapsto \sqrt z$; note that $u\in\K$ is symmetric with respect to the real axis.

Now let $\alpha_n=c_n^{3/4}d_n^{1/4}$ and $\beta_n=c_n^{1/4}d_n^{3/4}$. We deduce from \eqref{u(r)-est} with $r=1$, together with \eqref{dncn}, \eqref{u(s)-est} and the fact that $u(r)/r^{1/2}$ is decreasing (by Theorem~\ref{basic-props}, part~(a)) that
\begin{align}
\pi u(1)&>\sum_{n\ge 0} \int_{c_n}^{d_{n}} \frac{u(-s)}{2s^{3/2}}\,ds \ge \frac1K\sum_{n\ge 0} \int_{\alpha_n}^{\beta_n} \frac{u(s)}{2s^{3/2}}\,ds\notag\\
& \ge \frac{1}{2K}\sum _{n\ge 0} \frac{u(\beta_n)}{\beta_n^{1/2}}\log \left(\frac{\beta_n}{\alpha_n}\right)\ge \frac{\log d^{\,1/2}}{2K} \sum _{n\ge 0} \frac{u(\beta_n)}{\beta_n^{1/2}}.\notag
\end{align}
Hence $u(\beta_n)/\beta_n^{1/2}\to 0$ as $n\to \infty$, so $u(r)/r^{1/2}\to 0$ as $r\to \infty$, as required.
\end{proof}

{\it Remarks}\; 1. The example in the remark following the proof of Theorem~\ref{basic-props} and also Example~\ref{extreme-ex} show that we cannot hope to significantly weaken the condition $\limsup_{n\to\infty} d_{n}/c_n>1$ in Theorem~\ref{min-type}.

2. Theorem~\ref{min-type} can be used to show that the functions $u_{\alpha,\beta}$ considered by Kjellberg have order strictly less than~1/2. Indeed, for these functions the gaps between the intervals in the set~$E$ satisfy the hypotheses of Theorem~\ref{min-type}, so we certainly have $u_{\alpha,\beta}(r)=o(r^{1/2})$ as $r\to\infty$.

However, in view of the fact that the set~$E$ is invariant under scaling by $z\mapsto \beta z$ and the uniqueness property of functions $u\in \K$, we have $u_{\alpha,\beta}(\beta z)=Cu_{\alpha,\beta}(z)$ for all $z\in \C$, where $C=C(\alpha,\beta)>0$, and this property is incompatible with $u_{\alpha,\beta}$ having order~1/2, minimal type.

\section{Proofs of Theorem~\ref{discretise} and Corollary~\ref{entire-density}}\label{Kjellbergproof}
\setcounter{equation}{0}
The proofs of Theorem~\ref{discretise} and Corollary~\ref{entire-density} follow the structure of the reasoning in \cite[Chapter~4]{bK48} but require significant additional arguments due to the much greater generality of the sets~$E$ considered here.
\begin{proof}[Proof of Theorem~\ref{discretise}]
Once again, we express the subharmonic function $u\in\K$ as a potential of the form
\[
u(z)=u(0)+\int_0^{\infty} \log|1+z/t|\,d\mu(t),
\]
where $\mu(t)$ denotes the Riesz measure of the subharmonic function~$u$ in the disc $\{z:|z| \le t\}$, clearly entirely supported in the set~$E$. It is well known (see \cite[Chapter~3]{HK76}) that
\begin{equation}\label{mu-u}
\mu(r)=rI'(r),\quad\text{where } I(r)= \frac{1}{2\pi}\int_0^{2\pi} u(re^{i\theta})\,d\theta,\qfor r\ge 0.
\end{equation}
As noted earlier, the idea of Kjellberg's method is to discretise the measure $\mu$ and use the resulting discrete measure to construct a subharmonic function~$u_1$ which is close to~$u$ in much of the plane, and for which $u_1(z)=\log |f(z)|$, where~$f$ is an entire function.

Indeed, writing
\[
u_1(z)=u(0)+ \int_0^{\infty} \log|1+z/t|\,d[\mu(t)] = u(0)+\sum_{n=1}^{\infty} \log |1+z/x_n|,
\]
where the sequence $(x_n)$ is positive and increasing, and $-x_n\in E$, we see that $u_1(z)=\log |f(z)|$, where $f$ is the entire function
\[
f(z)=C\prod_{n=1}^{\infty}(1+z/x_n),\quad C=e^{u(0)}.
\]
Now recall from the statement of Theorem~\ref{discretise} that
\[
D_1=\C\setminus \{z: \text{{\rm dist}}(z,E)\le 1\}.
\]
We shall show that there exists a positive constant $R_0=R_0(u)$ such that
\begin{equation}\label{u-u1}
|u(z)-u_1(z)|\le 4\log|z|, \qfor z\in D_1, |z|\ge R_0,
\end{equation}
which gives \eqref{R-est}.

To do this, we write
\begin{align}
u(z)-u_1(z)&=\int_0^{\infty} \log|1+z/t|\,d(\mu(t)-[\mu(t)])\notag\\
&= \left(\int_0^{x_1} + \int_{x_1}^{\infty}\right)\log|1+z/t|\,d(\mu(t)-[\mu(t)])\notag\\
&= I_1(z)+I_2(z),
\end{align}
say, and estimate $I_1(z)$ and $I_2(z)$ in turn, for $z\in D_1$.

Since $\mu(x_1)=1$, we have
\begin{align}\label{I1-est1}
I_1(z)&=\int_0^{x_1} \log|1+z/t|\,d\mu(t)-\log|1+z/x_1|\notag\\
&=\int_0^{x_1} \log|t+z|\,d\mu(t)-\int_0^{x_1} \log t\,d\mu(t)-\log|1+z/x_1|,
\end{align}
provided that the two integrals in the latter expression are convergent. This is the first point at which our proof requires different reasoning from that in \cite{bK48}.

To prove this convergence, we use the fact that $I(r)$, defined in \eqref{mu-u}, is a non-negative increasing convex function of $\log r$, that is, $\phi(s)=I(e^s)=I(r), s\in\R,$ is a non-negative increasing convex function. Hence $rI'(r)\log r=s\phi'(s)\to 0$ as $s\to -\infty$, by a simple argument. Therefore, by \eqref{mu-u}, we deduce that the integral
\begin{align*}
\int_0^{x_1} \log t\,d\mu(t)&=[tI'(t)\log t]^{x_1}_0-\int_0^{x_1}I'(t)\, dt\\
&= x_1I'(x_1)\log x_1-I(x_1)+I(0)
\end{align*}
is convergent.

Since $|z+t|>1$ for $z\in D_1$ and $-t\in E$, we deduce that, for $z\in D_1$,
\begin{equation}\label{I1-est2}
0<\int_0^{x_1} \log|t+z|\,d\mu(t)\le \log(x_1+|z|)\int_0^{x_1}\,d\mu(t) = \log(x_1+|z|).
\end{equation}
Hence, by \eqref{I1-est1} and \eqref{I1-est2}, there exists $R_1>0$ and $C_1>0$ such that
\begin{equation}\label{I1-est3}
\left|I_1(z)\right| \le C_1, \qfor z\in D_1, |z|\ge R_1.
\end{equation}
Next we estimate $|I_2(z)|$ for $z\in D_1$, and for this second integral we need to give considerably more detail than was given in \cite{bK48}. Integrating by parts, we obtain
\begin{align}\label{int-by-parts}
I_2(z)&=\int_{x_1}^{\infty} \log|1+z/t|\,d(\mu(t)-[\mu(t)])\notag\\
&=-\int_{x_1}^{\infty}(\mu(t)-[\mu(t)])\,d\log|1+z/t|,
\end{align}
since $\mu(x_1)=[\mu(x_1)]$, so
\begin{equation}\label{I2-est1}
|I_2(z)|\le\int_{x_1}^{\infty}\left|d\log|1+z/t|\right|.
\end{equation}
Now, for $z$ in the right half-plane and $t>0$, the function $t\mapsto \log|1+z/t|$, $t>0$, is decreasing (as $t$ increases), so
\begin{equation}\label{I2-est2}
\int_{x_1}^{\infty}\left|d\log|1+z/t|\right|=-\int_{x_1}^{\infty}d\log|1+z/t|=\log|1+z/x_1|.
\end{equation}
Now we assume that $z=x+iy$ lies in the left half-plane, and observe that
\begin{equation}\label{inc-dec}
t\mapsto \log|1+z/t| \text{ is }
\begin{cases}
\text{decreasing for } 0< t\le |z|^2/|x|,\\
\text{increasing for } t\ge |z|^2/|x|;
\end{cases}
\end{equation}
the change from decreasing to increasing occurs at the value $t=|z|^2/|x|$ where $1+z/t$ is orthogonal to~$z$. Hence, for~$z$ in the left half-plane with $|z|\ge x_1$,
\begin{align*}
\int_{x_1}^{\infty}\left|d\log|1+z/t|\right|&=-\int_{x_1}^{|z|^2/|x|}d\log|1+z/t|+\int^{\infty}_{|z|^2/|x|}d\log|1+z/t|\\
&=\log|1+z/x_1|-2\log\left|1+z|x|/|z|^2\right|\\
&=\log|1+z/x_1|-2\log|y|/|z|.
\end{align*}
It follows from this estimate, and \eqref{I2-est1} and \eqref{I2-est2}, that if $z\in D_1\setminus \{z=x+iy:x< 0,|y|< 1/2\}$ and $ |z|\ge x_1$, then
\begin{equation}\label{I2-est3}
|I_2(z)|\le \left|\log|1+z/x_1|\right|+2\log |z|+2\log 2.
\end{equation}
Finally, suppose that $z\in D_1\cap \{z=x+iy:x < -b_0, |y|< 1/2\}$. Then $z$ lies in a set of the form
\[
\Omega_n=\{z:|z+b_n|>1,|z+a_{n+1}|>1, |y|<1/2\},\;\text{ where }
n\ge 0,
\]
and \eqref{int-by-parts} can be written as
\[
I_2(z)=-\int_{x_1}^{b_n}(\mu(t)-[\mu(t)])\,d\log|1+z/t|-\int_{a_{n+1}}^{\infty}(\mu(t)-[\mu(t)])\,d\log|1+z/t|,
\]
since the measure $\mu$ is supported in $E$.

Also, for $n$ sufficiently large and $z\in \Omega_n$, we have $b_n<|z|^2/|x|<a_{n+1}$, so in this case,
\begin{align}\label{I2-est4}
|I_2(z)|&\le\int_{x_1}^{b_n}\left|d\log|1+z/t|\right|+\int_{a_{n+1}}^{\infty}\left|d\log|1+z/t|\right|\notag\\
&=\log|1+z/x_1|-\log\left|1+z/b_n\right|-\log\left|1+z/a_{n+1}\right|,
\end{align}
in view of \eqref{inc-dec}.

Next we observe that, if $z\in \Omega_n$, for $n\ge 0$, then
\begin{equation}\label{z+bn}
|1+z/b_n|\ge 1/|z|\quad \text{and} \quad |1+z/a_{n+1}| \ge 1/(2|z|).
\end{equation}
The first estimate is immediate since $|z|>|b_n|$ and $|z+b_n|>1$ for $z\in \Omega_n$. The second estimate holds because $|z||a_{n+1}+z| \ge |x|(a_{n+1}+|x|)>a_{n+1}/2$, for $z=x+iy \in \Omega_n$, by an elementary calculation.

Therefore, by \eqref{I2-est4} and \eqref{z+bn}, we deduce that \eqref{I2-est3} holds for all values of $z\in D_1$ as long as $|z|$ is large enough.

Combining \eqref{I1-est3} and \eqref{I2-est3}, we deduce that there exist $R_2>0$ and $C_2>0$ such that
\begin{equation}\label{u-u1-est}
|u(z)-u_1(z)|\le 3\log |z|+C_2,\qfor z\in D_1, |z|\ge R_2,
\end{equation}
which gives \eqref{u-u1}. This proves the first part of Theorem~\ref{discretise}.

To prove the second part, we first claim that if \eqref{d-cond} holds, that is, $b_n/a_n \ge d>1$ for $n\ge 0$, then there exists $C_3>0$ such that
\begin{equation}\label{B-est}
u(z)\le C_3,\;\; \text{whenever }\text{{\rm dist}}(z,E)\le 1.
\end{equation}
We use Lemma~\ref{Beur} again. First, let~$n$ be so large that $a_n(1-1/\sqrt d)>1$, and consider a general point $-s\in [-b_n,-a_n]$. We have $s(1-1/\sqrt d)>1$ and $u=0$ on at least one of the intervals $[-s\sqrt d,-s]$ or $[-s, -s/\sqrt d\,]$. Thus, if we apply Lemma~\ref{Beur} with the origin moved to $-s$ and the radii $r_1=1$, $r_2=s-s/\sqrt d>1$, then we obtain
\[
\max_{|z+s|\le s-s/\sqrt d}u(z)\ge \frac12\left(\frac{r_2}{r_1}\right)^{1/2}\max_{|z+s|\le 1}u(z) = \frac12s^{1/2}\left(1-1/\sqrt d\right)^{1/2}\max_{|z+s|\le 1}u(z).
\]
Also, by Theorem~\ref{basic-props},  part~(a),
\[
u(z) \le |z|^{1/2}u(1)\le \left(s\sqrt d\right)^{1/2}u(1),\qfor |z| \le  s\sqrt d,
\]
so the claim \eqref{B-est} follows, since $\{z:|z+s|\le s-s/\sqrt d\,\}\subset \{z:|z|\le s\sqrt d\,\}$.

Now let $v(z)=u_1(z)-(3\log|z|+C_2)$. By \eqref{u-u1-est} and \eqref{B-est}, we deduce that
\[
v(z)\le u(z) \le C_3,
\]
when $z$ lies on the boundary of any complementary component of $D_1$.  Since~$v$ is subharmonic in $\C\setminus \{0\}$, we deduce by the maximum principle that $v(z)\le C_3$ in each complementary component of $D_1$, except possibly one that contains 0, and hence there exists $R=R(u)>0$ such that
\begin{equation}\label{fu2}
\log|f(z)|=u_1(z) \le u(z)+4\log |z|, \qfor |z|\ge R.
\end{equation}
This completes the proof of Theorem~\ref{discretise}.
\end{proof}

\begin{proof}[Proof of Corollary~\ref{entire-density}]
We shall assume that the required order $\rho\in (0,1/2)$ and consider the function $u\in\K$ with $E=\cup_{n=0}^{\infty}[-b_n,-a_n]$, where
\[
b_n=\exp(n^2/(4\rho)) \quad \text{and}\quad a_n=b_n e^{-n}, \qfor n\ge 0,
\]
and also $u(0)=1$. The proof for the case $\rho=0$ us similar, but with $b_n=\exp(n^3)$.

Then $a_n^{1/n} \to \infty$ as $n\to \infty$ and it is easy to check that the logarithmic density of~$E$ is $2\rho$, so the order and lower order of~$u$ is~$\rho$, by Theorem~\ref{exactorder}. Also,
\[
b_n/a_n\to \infty \;\text{ as } n\to \infty,
\]
so we can apply Theorem~\ref{discretise} to obtain an entire function of the form
\[
f(z)=\prod_{n=1}^{\infty}(1+z/x_n),
\]
where the sequence $(x_n)$ is positive and increasing, and $-x_n\in E$, such that~$f$ has order and lower order $\rho$ and
\[
\log|f(z)|\le u(z)+4\log |z|, \qfor |z|\ge R,
\]
where $R=R(u)>0$. If we then redefine~$f$ to be
\[
f(z)=\prod_{n=6}^{\infty}(1+z/x_n),
\]
then~$f$ again has order~$\rho$ and
\[
\log|f(z)|\le u(z), \qfor |z|\ge R,
\]
for some possibly larger $R$.

Since $u=0$ on the set $E$, we deduce that $\{r: A(r,\log|f|)>0)\}$ is a subset of $E^c \cup \{x: -R<x\le 0\}$, and so has lower logarithmic density at most $1-2\rho$, and hence exactly $1-2\rho$, by Barry's theorem \eqref{Barry-order}, applied with $\alpha=1/2$.

The set $\{r: A(r,\log|f|)>0)\}$ has upper logarithmic density at most $1-2\rho$ also. Hence it has upper logarithmic density exactly  $1-2\rho$ by another theorem of Barry \cite{Ba64}, applied with $\alpha=1/2$ again, which states that if~$u$ is a non-constant subharmonic function of lower order $\lambda\in [0,1)$ and $\lambda<\alpha<1$, then
\begin{equation}\label{Barry-lowerorder}
\overline{\Lambda}(\{r: A(r,u)>\cos (\pi\alpha) B(r,u)\}) \ge 1-\lambda/\alpha.
\end{equation}
This completes the proof of Corollary~\ref{entire-density}.
\end{proof}

\end{document}